\title{% Please, capitalize only the first word
Partially alternative  algebras
    }
\author{% Please, use "Firstname Lastname" format, without abreviations
   Tianran  Hua, Ekaterina Napedenina and Marina  Tvalavadze}
\abstract{%
   In this paper, we introduce a novel generalization of the classical property of algebras known as "being alternative," which we term "partially alternative." This new concept broadens the scope of alternative algebras, offering a fresh perspective on their structural properties.  We showed that partially alternative algebras exist in any even dimension. Then we classified middle $\mathbb C$-associative (noncommutative) algebras satisfying partial alternativity condition. We demonstrated that for any four-dimensional partially alternative real division algebra, one can select a basis that significantly simplifies its multiplication table. Furthermore, we established that every four-dimensional partially alternative real division algebra naturally gives rise to a real Lie algebra, thereby bridging these two important algebraic frameworks. Our work culminates in a description of all Lie algebras arising from such partially alternative algebras. These results extend our understanding of algebraic structures and reveal new connections between different types of algebras. 
    }
\keywords{% 2-5 keywords
  real division algebras, alternative algebras, automorphism groups, reflections, Lie algebras
    }
\begin{document}

\section{Introduction}

As is well known, an {\it alternative} algebra is defined by two identities $x(xy) = (xx)y$ and $(yx) x = y (xx)$  for all  $x, y$ in this algebra. This can be interpreted as stating that the associator of this algebra, defined as $(x, y, z)= (xy)z - x(yz)$, is an alternating trilinear mapping.  Notably,  alternativity is a weaker form of associativity; hence, the class of  associative algebras is contained in the class of alternative algebras.
The converse is not true, and the most well-known example of an alternative non-associative algebra is the octonion algebra. 

Over the years, alternative algebras have garnered the attention of many researchers who examined these algebras from different perspectives. 
 In 1957, Bruck and Kleinfeld established that an alternative division ring must necessarily be a division algebra of Cayley-Dickson type (a generalization of the Cayley numbers to an arbitrary field) or an associative division algebra \cite{BK}. There exists a substantial body of literature dedicated to exploring representations, cohomology, deformations, actions, and various properties of semisimple alternative algebras.

Subsequent research began to investigate potential generalizations of alternativity. In \cite{Sh} the author introduced the concept of {\it nearly alternative} algebras.
This is a class of noncommutative Jordan algebras $\mathcal J$
satisfying the identity $( [x, y], z, z ) =0$ for all elements $x, y, z\in \mathcal J$. Shestakov also demonstrated the following result.

\begin{theorem}
If $\mathcal J$ is a simple nearly alternative algebra with an idempotent $e\neq 1$, then $\mathcal J$ is commutative or alternative.
\end{theorem}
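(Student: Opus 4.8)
The plan is to analyze $\mathcal J$ through the Peirce decomposition determined by the proper idempotent $e$, and to show that the gap between the defining identity $([x,y],z,z)=0$ and full alternativity is controlled by an ideal which simplicity then collapses. First I would recall that a noncommutative Jordan algebra is by definition flexible, so $(x,y,x)=0$, and satisfies the Jordan identity; consequently the symmetrized algebra $\mathcal J^{+}$ (with product $x\circ y=\frac{1}{2}(xy+yx)$) is a linear Jordan algebra in which $e$ remains an idempotent. This yields the Peirce decomposition
$$\mathcal J=\mathcal J_0\oplus\mathcal J_{1/2}\oplus\mathcal J_1,\qquad \mathcal J_i=\{x\in\mathcal J:\ e\circ x=ix\},$$
together with the multiplication rules of noncommutative Jordan Peirce theory: $\mathcal J_0$ and $\mathcal J_1$ are subalgebras, $\mathcal J_0\mathcal J_1=\mathcal J_1\mathcal J_0=0$, and $\mathcal J_{1/2}\mathcal J_{1/2}\subseteq\mathcal J_0\oplus\mathcal J_1$, with $\mathcal J_{1/2}$ a bimodule over $\mathcal J_0\oplus\mathcal J_1$. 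Since $e\neq 1$, this grading is genuinely nontrivial, which is exactly what drives the argument.

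Next I would linearize the hypothesis. Because the associator is trilinear, replacing $z$ by $z+w$ in $([x,y],z,z)=0$ gives
$$([x,y],z,w)=-([x,y],w,z),$$
so an associator with a commutator in its first slot is skew in the remaining two slots. Combining this with flexibility, which yields $(a,b,c)=-(c,b,a)$, lets me move the commutator through the associator and rewrite many associators in alternating form. The aim of this step is to show that on each Peirce component the associator becomes alternating whenever \emph{any} commutator is present, and to compute the graded commutators $[\mathcal J_i,\mathcal J_j]$ explicitly.

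I would then set up the dichotomy. Let $K$ be the commutator ideal of $\mathcal J$. The linearized identity forces the associator constrained by $K$ to be alternating, and the Peirce computations show that the subspace on which both commutation and the non-alternating part of the associator vanish is a two-sided ideal $I$. By simplicity, $I=0$ or $I=\mathcal J$. If $I=\mathcal J$, then all commutators vanish, $\mathcal J$ is commutative, and hence is a Jordan algebra. If $I=0$, the nontrivial Peirce grading propagates non-commutativity so as to force the associator to be alternating on all of $\mathcal J$, i.e. $\mathcal J$ is alternative; here I would invoke that for a flexible algebra the left alternative law $(x,x,y)=0$ already implies the right one via $(x,x,y)=-(y,x,x)$, so a single alternative law suffices.

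The main obstacle is this final propagation step: bridging from the \emph{restricted} skew-symmetry, available only when a commutator occupies the first slot, to \emph{unrestricted} alternativity of the associator. This is where the proper idempotent is indispensable, since the products $\mathcal J_{1/2}\mathcal J_{1/2}\subseteq\mathcal J_0\oplus\mathcal J_1$ and the off-diagonal commutators supply enough commutators to reach every element through the Peirce relations; a unital algebra with $e=1$ would furnish no such leverage. Carrying this out requires careful bookkeeping of associators and commutators among the three Peirce components, using flexibility and the Jordan identity repeatedly, and I expect this computation, rather than any single conceptual leap, to be the technical heart of the proof.
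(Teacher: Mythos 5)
The paper does not prove this theorem at all: it is quoted as background from Shestakov's paper \cite{Sh}, so there is no internal proof to compare against. Your proposal must therefore stand on its own, and as written it does not.

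There are two genuine gaps, and they sit exactly at the load-bearing points. First, your dichotomy rests on the claim that ``the subspace on which both commutation and the non-alternating part of the associator vanish is a two-sided ideal $I$.'' This set is never defined precisely, and the natural reading of it --- the intersection of the commutative center $\{x : [x,y]=0 \text{ for all } y\}$ with an alternative-nucleus-type set --- is not an ideal in general: the commutative center of a nonassociative algebra is merely a subspace (compare the paper's own ${\mathcal NC}(\mathcal A)$, which the authors explicitly note ``may or may not be a subalgebra,'' let alone an ideal). You assert that ``Peirce computations show'' ideality, but no such computation is given, and without it simplicity cannot be invoked. Second, even granting the dichotomy, the branch $I=0 \Rightarrow \mathcal J$ alternative is a non sequitur: the vanishing of an ideal carries no positive information about the associator. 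The entire content of the theorem --- that a simple, \emph{noncommutative}, nearly alternative algebra with a proper idempotent must satisfy $(x,x,y)=0$ for all $x,y$ --- is deferred to ``careful bookkeeping of associators and commutators among the three Peirce components,'' which you acknowledge is ``the technical heart'' but never carry out. A correctly structured argument would more likely take the ideal generated by all associators $(x,x,y)$ (or by all commutators), split on whether it is $0$ or $\mathcal J$, and then do the hard Peirce analysis to show the nontrivial branch forces the other identity; your write-up has the skeleton of such an argument but inverts which conclusion follows from which alternative, and supplies neither of the two computations that would make it a proof.
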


In \cite{Nik, Mer}, the authors studied {\it almost} alternative algebras, which were originally introduced by Albert. These are algebras over a field $\mathbb F$ of characteristic not 2  defined by a few specific conditions, one of them being
$$ z(xy) = \alpha (zx)y + \beta (zy)x +\gamma (xz)y + \delta (yz)x + \epsilon y(zx) + \eta x(zy) +\sigma y(xz) + \tau x(yz),$$
where all coefficients belong to $\mathbb F$. Under some technical conditions regarding the coeffitients, this algebra becomes Lie-admissible.

In this paper, we propose a further generalization of the concept of alternativity, which we term {\it partial alternativity}. We present examples of algebras that exhibit partial alternativity without being  alternative, thereby illustrating that this new class of algebras encompasses a broader range than the class of alternative algebras. Subsequently, we examine four-dimensional partially alternative real division algebras and elucidate their direct connection to Lie algebras.

One of the possible directions for studying partially alternative algebras involves examining polynomial equations over these structures. The problem of solving polynomial equations over various algebraic structures, such as fields, matrices, and alternative algebras (including quaternions and Cayley-Dickson algebras), is considered a key problem in mathematics. For instance, recent studies have focused on specific types of polynomials over split octonions, as seen in works like  \cite{CV, CL}, which explore solutions to polynomial equations with scalar coefficients over algebraically closed fields. Additionally, other research, such as  \cite{CV}, delves into the roots of octonion polynomials, offering insights into the behavior of these equations in non-associative settings.

\section{Definitions and preliminary example}

Let $\mathcal A$ denote  a nonassociative algebra over $\mathbb R$. Recall that  $(a, b, c)= (ab)c - a(bc)$ is an $\mathbb R$-trilinear map, called an associator, and it measures the degree of nonassociativity. 

In this section we introduce the  \emph{partial alternativity property}  for a nonassociative algebra. This property can be viewed as a natural  generalization of regular alternativity, defined by the identities:
$(x, x, y)=(y, x, x)=0$ for  all  $x, y\in \mathcal A$.  We note  that the latter identities always imply the \emph{flexibility} identity $(x, y, x)=0$.

%\begin{definition}
%Let $\mathcal A$ be a real  algebra with a unit element 1. Then $\mathcal A$ is called \emph{partially alternative} if for every $x\in \mathcal A$ such that $x^2 = -1$ and any $y\in \mathcal A$  we have that 
%$$ (x, x, y)=(y, x, x)=(x, y, x)= 0.  \eqno(1)$$  
%\end{definition}
%\par\medskip
\begin{definition} \label{ImaginaryUnits} 
Let $\mathcal{A}$ be a real nonassociative algebra with unit element $1$. An element $q \in \mathcal{A}$ is called an \textit{imaginary unit}  if $q^2=-1$.
Denote by $\mathcal{I}_\mathcal{A}$  the set of all imaginary units in $\mathcal{A}$.
\end{definition}

\begin{definition}\label{PA}
Let $\mathcal{A}$ be a real  nonassociative algebra with unit element $1$ and $\mathcal{I}_\mathcal{A}\neq \emptyset$. Then
\begin{enumerate}[(1)]
    \item $\mathcal{A}$ is called \textit{partially left alternative}  if  for all $x\in\mathcal{I}_\mathcal{A}$ and $y\in\mathcal{A}$,  $(x,x,y)=0$;
    \item $\mathcal{A}$ is called \textit{partially flexible} if  for all $x\in\mathcal{I}_\mathcal{A}$ and $y\in\mathcal{A}$, $(x,y,x)=0$;
    \item $\mathcal{A}$ is called \textit{partially right alternative} if for all $x\in\mathcal{I}_\mathcal{A}$ and $y\in\mathcal{A}$, $(y,x,x)=0$.
\end{enumerate}
% (1)  $\mathcal{A}$ is called \textit{partially left alternative}  if  for all $x\in\mathcal{I}_\mathcal{A}$ and $y\in\mathcal{A}$,  $(x,x,y)=0$;\\
% (2) $\mathcal{A}$ is called \textit{partially flexible} if  for all $x\in\mathcal{I}_\mathcal{A}$ and $y\in\mathcal{A}$, $(x,y,x)=0$;\\
% (3) $\mathcal{A}$ is called \textit{partially right alternative} if for all $x\in\mathcal{I}_\mathcal{A}$ and $y\in\mathcal{A}$, $(y,x,x)=0$.

The algebra $\mathcal{A}$ is called \textit{partially alternative}  if it is partially left alternative, flexible, and right alternative.
\end{definition}

Notice that the above definition requires the existence of at least one imaginary unit; otherwise, it would become too general, with no condition to hold. 

By Definition \ref{PA}, if $\mathcal{A}$ is partially alternative and $\mathcal{B}$ is a subalgebra of $\mathcal{A}$ containing the identity and isomorphic to $\mathbb{C}$, then $\mathcal{A}$ can be viewed as a $\mathcal{B}$-bimodule. Specifically, for any $x, x' \in \mathcal{B}$ and $y \in \mathcal{A}$, the associator conditions are satisfied: \[(x, x', y) = (x, y, x') = (y, x, x') = 0.\]

It is immediate that  any alternative algebra with unity is  partially alternative and, therefore,  the class of such alternative algebras is contained in the class of partially alternative algebras. Moreover,  this inclusion is strict. 
We soon  provide evidence for this statement.

Next, we introduce the special linear subspace of $\mathcal A$, named its {\it commutative nucleus} that serves as  a measure of the extent to which the given algebra deviates from being commutative. 

\begin{definition}\label{CommNuc} Let $\mathcal A$ be a nonassociative algebra.  Then  the set \[{\mathcal NC}(\mathcal A) = \{ x\in \mathcal A\,|\,  xy =yx,\, \text{for\,any}\, y\in\mathcal A \}\] is called the \emph{commutative nucleus} of $\mathcal A$. 
\end{definition}

It is evident that  ${\mathcal NC}(\mathcal A)$  is a linear subspace that may or may not be a subalgebra. Besides, if  $\mathcal A$  is a unital algebra, its commutative nuclear contains the unit element $1$ and, therefore,  $\mathbb R 1 \subseteq {\mathcal NC}(\mathcal A)$ implies that   $\dim {\mathcal NC}(\mathcal A) \geq 1$.

In conclusion we note that partially alternative algebras exist in any dimension $n$ provided that $n$ is an even number. This follows from the example below.

\begin{example}\label{PAexample1} Consider $\mathcal A_k = \mathbb C \oplus V_1\oplus \ldots \oplus V_k$ with $k\geq 1$,  $\mathbb C=\text{Span}_{\mathbb R}\{ 1, e_1\}$  with $e^2_1 =-1$ where
$1$ denotes the unity of $\mathcal A_k$. Besides each $V_i = \text{Span}_{\mathbb R}\{v_{i1}, v_{i2}\}$ is a two-dimensional subspace. 

We define multiplication of $\mathcal A_k$  as follows.  We set $e_1 \in  {\mathcal NC}(\mathcal A_k) $ which implies that $e_1$ will commute with any  element from $\mathcal A_k$. Also, we let 
 $$e_1 v_{i1} = v_{i2}\quad \text{and}\quad e_1 v_{i2} = -v_{i1}.$$
Next, set $v_{ij}v_{kl}=0$ for all $(i, j)\neq (k, l)$ and $v^2_{ij} = a_{ij} 1,$ where $a_{ij}$ is a positive real number.

\end{example} 

\begin{proposition} The algebra $\mathcal A_k$ defined in Example \ref{PAexample1} is a partially alternative  (non-alternative) algebra of dimension $2k+2$.
\end{proposition}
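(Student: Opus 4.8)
The plan is to establish three things in turn: the dimension, partial alternativity, and the failure of full alternativity. The dimension is immediate, since $\mathcal{A}_k = \mathbb{C}\oplus V_1\oplus\cdots\oplus V_k$ is a direct sum of $k+1$ subspaces, the first of real dimension $2$ and each $V_i$ of real dimension $2$, so $\dim_{\mathbb{R}}\mathcal{A}_k = 2+2k$. The multiplication is prescribed on the basis $\{1,e_1\}\cup\bigcup_i\{v_{i1},v_{i2}\}$ and extended $\mathbb{R}$-bilinearly, so $\mathcal{A}_k$ is a genuine unital nonassociative $\mathbb{R}$-algebra, and no further well-definedness check is needed.

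The crucial first step is to determine $\mathcal{I}_{\mathcal{A}_k}$ explicitly. Writing a general element as $q = \alpha\cdot 1 + \beta e_1 + w$ with $w = \sum_i(c_{i1}v_{i1}+c_{i2}v_{i2})$, I would expand $q^2$ using the multiplication rules. Since $e_1$ is central, since $e_1 w = \sum_i(c_{i1}v_{i2}-c_{i2}v_{i1})$, and since the quadratic part in $w$ collapses to a scalar multiple of $1$ because $v_{ij}v_{kl}=0$ whenever $(i,j)\neq(k,l)$, one obtains $q^2 = \big(\alpha^2-\beta^2+\sum_i(a_{i1}c_{i1}^2+a_{i2}c_{i2}^2)\big)\cdot 1 + 2\alpha\beta\,e_1 + 2\alpha w + 2\beta e_1 w$. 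Setting $q^2=-1$ and comparing components in $1$, in $e_1$, and in each $V_i$ yields $2\alpha\beta=0$, a linear system in $(c_{i1},c_{i2})$ with coefficient matrix of determinant $\alpha^2+\beta^2$, and the scalar equation $\alpha^2-\beta^2+\sum_i(a_{i1}c_{i1}^2+a_{i2}c_{i2}^2)=-1$. I expect this to be the main obstacle, but the positivity of the $a_{ij}$ is precisely what resolves it: if $\alpha^2+\beta^2\neq 0$ the linear system forces $w=0$, and then $2\alpha\beta=0$ together with the scalar equation gives $\alpha=0$, $\beta=\pm 1$; whereas if $\alpha=\beta=0$ the scalar equation reads $\sum_i(a_{i1}c_{i1}^2+a_{i2}c_{i2}^2)=-1$, impossible since the left side is nonnegative. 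Hence $\mathcal{I}_{\mathcal{A}_k}=\{\pm e_1\}$.

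With the imaginary units identified, partial alternativity reduces to a short verification. By trilinearity of the associator, $(\pm e_1,\pm e_1,y)$ equals $(e_1,e_1,y)$, and likewise for the flexible and right patterns, so it suffices to treat $x=e_1$. The key observation is that $e_1$ is central and satisfies $e_1(e_1 y)=-y$ for every basis vector $y$ (verified case by case on $1,e_1,v_{i1},v_{i2}$), hence for all $y\in\mathcal{A}_k$ by linearity. Then $(e_1,e_1,y)=-y-e_1(e_1 y)=0$; using centrality to rewrite $(ye_1)e_1=(e_1 y)e_1=e_1(e_1 y)$ gives $(y,e_1,e_1)=(ye_1)e_1+y=0$; and $(e_1,y,e_1)=(e_1 y)e_1-e_1(ye_1)=0$, again by centrality. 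Thus $\mathcal{A}_k$ is partially left alternative, partially flexible, and partially right alternative.

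Finally, to show $\mathcal{A}_k$ is not alternative I would exhibit a single nonzero associator of the form $(x,x,y)$ with $x\notin\mathcal{I}_{\mathcal{A}_k}$. Taking $x=v_{i1}$ and $y=e_1$, and using $v_{i1}^2=a_{i1}\cdot 1$ together with $v_{i1}e_1=v_{i2}$ and $v_{i1}v_{i2}=0$, one computes $(v_{i1},v_{i1},e_1)=a_{i1}e_1-v_{i1}(v_{i1}e_1)=a_{i1}e_1\neq 0$ because $a_{i1}>0$. This violates the full left alternative identity and completes the argument. It is worth noting that positivity of the $a_{ij}$ is used twice in an essential way, once to keep the $v_{ij}$ out of $\mathcal{I}_{\mathcal{A}_k}$ and once to make this last associator nonzero, which is exactly what separates partial alternativity from alternativity here.
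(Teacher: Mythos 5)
Your proof is correct and follows essentially the same route as the paper: you determine $\mathcal{I}_{\mathcal{A}_k}=\{\pm e_1\}$ by the same expansion of $q^2$ (including the same $2\times 2$ linear system with determinant $\alpha^2+\beta^2$ and the same use of positivity of the $a_{ij}$), verify the three associator conditions for $e_1$ via its centrality, and exhibit a nonzero associator for non-alternativity. The only cosmetic differences are the order of the steps and your choice of witness $(v_{i1},v_{i1},e_1)$ in place of the paper's $(v_{11},v_{11},v_{12})$.
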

\begin{proof}
First, it is clear that $\mathcal A_k$ is not alternative since, for example, 
$$ (v_{11}, v_{11}, v_{12}) = v^2_{11} v_{12} - v_{11}(v_{11}v_{12}) = a_{11}v_{12} - v_{11}\cdot 0 = a_{11}v_{12}\neq 0.$$
Let us now show that $e_1$ satisfies partial alternativity condition (i.e.~the left, right and flexible partial alternativity). Thus, we need to show that 
$$ (e_1, e_1, x)=(e_1, x, e_1)=(x, e_1, e_1) =0,$$ for all $x\in\mathcal A_k$.

Due to linearity in $x$, it suffices to verify the conditions only for basis elements of $\mathcal A_k$. Let $x=v_{ij} \in V_i$, $i\in\{1, \ldots, k\}$ and $j\in \{1,2\}$. Then
\begin{align*}
& (e_1, e_1, v_{i1}) = e^2_1 v_{i1} - e_1(e_1 v_{i1}) = -v_{i1} - e_1v_{i2}=-v_{i1}+v_{i1}=0,\\
& (e_1, e_1, v_{i2}) = e^2_1 v_{i2} - e_1(e_1 v_{i2}) = -v_{i2} - e_1(-v_{i1})=-v_{i2}+v_{i2}=0.
\end{align*}
Hence, $(e_1, e_1, x)= 0$ for all $x\in\mathcal A_k$.  The other two conditions, that is,  $(e_1, x, e_1)=0 $ and $(x, e_1, e_1) =0,$ for all $x\in\mathcal A_k$, follow from the first one and the fact that $e_1 \in {\mathcal NC}(\mathcal A_k) $. 

We next show that the set $\mathcal{I}_{\mathcal{A}_k} = \{-e_1, e_1\}.$ For this, we consider any $x = z_0 + v_1+\ldots+v_k$ in $\mathcal A_k$ such that $x^2=-1,$ $z_0\in \mathbb C$ and $v_i\in V_i$. Put 
$z_0= \alpha 1 + \beta e_1$ and $v_i = \alpha_i v_{i1} + \beta_i v_{i2}$ where $\alpha, \beta, \alpha_i, \beta_i \in \mathbb R$.

First, we assume that $z_0=0$, then $x^2 = v^2_1 + \ldots +v^2_k$ where each $v^2_i = (\alpha^2_i a_{i1} + \beta^2_i a_{i2} )1$, and $ \alpha^2_i a_{i1} + \beta^2_i a_{i2} \geq 0$. Hence, $x^2=-1$ is impossible in this case.

If now $z_0\neq 0$, then since $z_0 \in {\mathcal NC}(\mathcal A_k)$ and $v_iv_j=0$ for $i\neq j$, we have that 
$$ x^2= (z^2_0+v^2_1+\ldots+v^2_k) + 2(z_0v_1+\ldots z_0v_k),$$ where the first term is in $\mathbb C$ and each $z_0v_i$ is in $V_i$. As follows from the assumption $x^2=-1$,
each $z_0v_i=0$. In terms of the coefficients 
$$z_0v_i= (\alpha\alpha_i - \beta\beta_i)v_{i1} + (\alpha_i\beta +\alpha\beta_i)v_{i2} =0.$$ This implies that $\alpha\alpha_i  - \beta \beta_i = 0$ and $\beta\alpha_i + \alpha\beta_i = 0.$ This is a system of two linear 
equations in variables $\alpha_i, \beta_i$ and its determinant is $ \alpha^2+\beta^2\neq 0$ as $z_0\neq 0$.   Hence, $\alpha_i=\beta_i=0$ for each $i$.

It follows that in this case $x^2=-1$ implies that $x=z_0\in \mathbb C$, hence,  $x=\pm e_1$. The proof is complete.
\end{proof}

\section {Middle \texorpdfstring{$\mathbb C$}{C}-associative algebras}

Let $\mathcal M$ be an $n$-dimensional real algebra, and $\mathcal C$ be a subalgebra isomorphic to the complex numbers.  Then  $\mathcal M$ is  {\it middle $\mathbb C$-associative} (with respect to $\mathcal C$) if 
\begin{enumerate}[(1)]
    \item $\mathcal M$ is a $\mathcal C$-bimodule  and
    \item $(xz)y=x(zy)$ for all $x, y \in \mathcal M$ and $z\in \mathcal C$ (the middle $\mathbb C$-associativity condition).
\end{enumerate}

% (1) $\mathcal M$ is a $\mathcal C$-bimodule  and \\

% (2) $(xz)y=x(zy)$ for all $x, y \in \mathcal M$ and $z\in \mathcal C$ (the middle $\mathbb C$-associativity condition).\\

In \cite{AHK04}, the authors provide a concise summary of the results regarding the classification of  four-dimensional $\mathbb C$-associative algebras (left, right or middle $\mathbb C$-associative) obtained in a series of their papers. 
 In particular, they fully classified the \emph{commutative} algebras of this type. For the \emph{noncommutative} case, they only identified the canonical multiplication table.

In what follows, we assume that $\mathcal M$ is a four-dimensional real algebra, and focus on the middle $\mathbb C$-associativity condition.  As shown in \cite{AHK}, if $\mathcal M$ is a  noncommutative middle 
$\mathbb C$-associative algebra, then  there exists a basis $\{ 1, i, j, k\}$  in $\mathcal M$ with the following multiplication table:

\begin{equation}\label{Tn} 
\begin{tabular}{c|cccc} 
          & $\bf 1$            & $\bf i$                                  & $\bf j$                                                 & $\bf k$      \\ 
\hline
$\bf 1$     &  $1$           & $i$                                  &  $j$                                                &   $k$     \\
$\bf i$      &  $i$            & $-1$                                &  $k$                                                &   $-j$ \\
$\bf j$     &  $j$          & $-k$                                 &  $a 1 + b i + c j + d k$                   & $f 1 + g i + hj + e k $   \\ 
$\bf k$     & $k$           &  $j$                              &  $-f 1-g i -h j -e k$                & $a 1 + bi + cj +dk $     \\ 
\end{tabular} \tag{$T_n$}
\end{equation}
\noindent where $a, b, c, d, f, g, h, e \in \mathbb R$. Notice that $\mathcal M$ with multiplication table \eqref{Tn} is middle $\mathbb C$-associative  with respect to  $\mathcal C= \text{Span}_{\mathbb R}\{1, i\}$.

In the commutative case (see \cite[Theorem 1.4]{AHK}), there exists a basis $\{ 1, i, j, k\}$  in $\mathcal M$ with the following multiplication table:

\begin{equation}\label{Tc}    
\begin{tabular}{c|cccc} 
          & $\bf 1$            & $\bf i$                                  & $\bf j$                                                 & $\bf k$      \\ 
\hline
$\bf 1$     &  $1$           & $i$                                  &  $j$                                                 &   $k$     \\
$\bf i$      &  $i$            & $-1$                                &  $k$                                                &   $-j$ \\
$\bf j$     &  $j$          & $k$                                 &  $a 1 + b i $                                        & $f 1 + g i + hj $   \\ 
$\bf k$     & $k$           &  $-j$                              &  $f 1+g i + h j $                                   & $-a 1 - bi $     \\ 
\end{tabular} \tag{$T_c$} 
\end{equation} 
\noindent where $a, b, f, g, h\in \mathbb R$, and $h=0$ or 1.  Notice that $\mathcal M$ with multiplication table \eqref{Tc} is middle $\mathbb C$-associative  with respect to $\mathcal C= \text{Span}_{\mathbb R}\{1, i\}.$

Recall that $\mathcal{M}$ is called \emph{strictly} middle $\mathbb{C}$-associative if it is a middle $\mathbb{C}$-associative algebra that does not satisfy either the left or right $\mathbb{C}$-associativity conditions. Specifically, for all $x, y \in \mathcal{M}$ and $z \in \mathcal{C}$, one of the following does not hold: $(zx)y = z(xy)$ and $(xy)z = x(yz)$.

\begin{proposition} Let $\mathcal M$ be a commutative strictly middle $\mathbb C$-associative algebra.  Then $\mathcal M$ is partially alternative.

\end{proposition}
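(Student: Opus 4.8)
The plan is to use commutativity to collapse the three defining conditions of partial alternativity into one, and then to determine the imaginary units of $\mathcal M$ explicitly from the table \eqref{Tc}. First I would record two consequences of commutativity. For any $x,y\in\mathcal M$ we have $(x,y,x)=(xy)x-x(yx)=(xy)x-x(xy)=0$, so partial flexibility holds automatically; and $(y,x,x)=(yx)x-y(x^2)=x(xy)-(x^2)y=-(x,x,y)$, so partial right alternativity is equivalent to partial left alternativity. Hence it suffices to prove $(x,x,y)=0$ for every imaginary unit $x$ and every $y\in\mathcal M$. Since $x^2=-1$ for such $x$, this is the same as requiring that the left multiplication operator $L_x\colon y\mapsto xy$ satisfy $L_x^2=-\mathrm{id}_{\mathcal M}$.

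Next I would set up the structural identities furnished by the hypotheses. Write $\mathcal M=\mathcal C\oplus W$ with $\mathcal C=\mathrm{Span}_{\mathbb R}\{1,i\}$ and $W=\mathrm{Span}_{\mathbb R}\{j,k\}$. Middle $\mathbb C$-associativity gives $(u,z,v)=0$ for all $u,v\in\mathcal M$ and $z\in\mathcal C$, and the $\mathcal C$-bimodule property gives $(z,z',v)=0$ for $z,z'\in\mathcal C$. From \eqref{Tc} one reads off $iW\subseteq W$, with $i$ acting as the complex structure $j\mapsto k\mapsto -j$; thus $W$ is a $\mathcal C$-subbimodule that is one-dimensional over $\mathcal C\cong\mathbb C$, so every nonzero $z\in\mathcal C$ acts invertibly on $W$. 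Decomposing an imaginary unit as $x=z+w$ with $z\in\mathcal C$, $w\in W$ and expanding by trilinearity, the terms $(z,z,y)$ and $(w,z,y)$ vanish by the identities above, leaving $(x,x,y)=(z,w,y)+(w,w,y)$; this already isolates the part of the computation where genuine non-alternativity could occur.

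The heart of the argument is the claim $\mathcal I_{\mathcal M}=\{-i,i\}$, from which partial alternativity follows at once. To attack it I would use commutativity to write $x^2=z^2+2zw+w^2$, where by \eqref{Tc} one has $z^2\in\mathcal C$, $zw\in W$, and $w^2\in\mathcal C\oplus\mathbb R j$. Comparing $\mathcal C$- and $W$-components of $x^2=-1$ and exploiting that a nonzero $z\in\mathcal C$ acts invertibly on $W$ should force $w=0$ in the mixed case $z\ne0$, reducing the problem to the purely imaginary case $z=0$. Granting the claim, the conclusion is immediate: for $x=\pm i\in\mathcal C$ the bimodule property (equivalently, the verification $L_i^2=-\mathrm{id}$ on the basis of \eqref{Tc}) gives $(x,x,y)=0$, and with the reductions of the first paragraph this yields full partial alternativity.

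I expect the decisive obstacle to be establishing $\mathcal I_{\mathcal M}=\{\pm i\}$, that is, ruling out imaginary units with a nonzero $W$-component. The mixed case $z\ne0$ should yield to the invertibility argument, but the purely imaginary case $z=0$ asks whether $w^2=-1$ is solvable for $w\in W$, and this is governed by the structure constants $a,b,f,g$ together with the value of $h$ in \eqref{Tc}. It is exactly here that the strictly-middle hypothesis has to be brought to bear to exclude such $w$; the careful case analysis based on the explicit products $j^2=a1+bi$, $k^2=-a1-bi$, and $jk=f1+gi+hj$ is the technical core, while everything else is formal.
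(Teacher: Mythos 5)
Your opening reductions are correct and match what the paper leaves implicit: in a commutative algebra flexibility is automatic, $(y,x,x)=-(x,x,y)$, and the verification at $x=\pm i$ follows from the $\mathcal C$-bimodule property. The problem is that everything then hangs on the claim $\mathcal I_{\mathcal M}=\{\pm i\}$, which you explicitly defer as ``the technical core''; that claim \emph{is} the entire content of the proposition, so the proposal as written has not yet proved anything. Worse, the route you propose for it --- a case analysis on the structure constants $a,b,f,g,h$ of \eqref{Tc}, with strictness invoked to exclude $w\in W$ satisfying $w^2=-1$ --- cannot be completed, because at that level of generality the claim is false. Take \eqref{Tc} with $a=-1$ and $b=f=g=h=0$: then $j^2=-1$, $k^2=1$, $jk=0$. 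This algebra is commutative and middle $\mathbb C$-associative (it is an instance of \eqref{Tc}), and it is \emph{strictly} so, since $(ij)k=k^2=1$ while $i(jk)=0$, and $(jk)i=0$ while $j(ki)=-j^2=1$. Yet $j\in\mathcal I_{\mathcal M}$, and $(j,j,i)=j^2i-j(ji)=-i-jk=-i\neq 0$. So no bookkeeping with the structure constants will rule out purely imaginary units in $W$; strictness does not do the job you hope it does. (Incidentally, even your mixed case $z\neq 0$ is not immediate when $h=1$, since the $W$-component of $x^2$ is then $2zw+2\gamma\delta j$ rather than $2zw$.)

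The paper's proof turns at exactly this point onto a different track. It considers an imaginary unit $g\notin\mathcal C$ together with the \emph{additional} hypothesis that $\mathcal M$ is a bimodule over $\mathcal C'=\operatorname{Span}_{\mathbb R}\{1,g\}$; by commutativity this bimodule condition is precisely the statement that the partial alternativity identities hold at $g$, so the paper rules out imaginary units \emph{at which partial alternativity could hold}, rather than all imaginary units outside $\mathcal C$ --- a weaker target than yours, and the example above shows the stronger target is unattainable. Concretely, the paper sets $f=ig$, shows $f^2=1$, and splits into two cases: if $\{1,i,g,f\}$ is a basis, the resulting multiplication table makes $\mathcal M$ middle $\mathbb C$-associative with respect to $\mathcal C'$ as well, contradicting Lemma 1.2 of \cite{AHK}, which asserts that a commutative strictly middle $\mathbb C$-associative algebra has exactly \emph{one} two-dimensional subalgebra with this property (this is where strictness genuinely enters); if $\{1,i,g,f\}$ is not a basis, writing $ig=\alpha 1+\beta i+\gamma g$ and multiplying by $g$ forces $\beta^2=-1$, a contradiction. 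The appeal to the uniqueness lemma imported from \cite{AHK}, applied to the second copy $\mathcal C'$ of $\mathbb C$, is the key idea missing from your proposal, and it cannot be replaced by the elementary case analysis you envision.
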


\begin{proof}
To show that  $\mathcal M$ is partially alternative,  we first assume  that there exists an element $g\in \mathcal M$ but not in $\mathcal C =\text{Span}_{\mathbb R}\{1, i\}$ such that $g^2=-1$ and $\mathcal M$ is a $\mathcal C'$-bimodule
where $\mathcal C' = \text{Span}_{\mathbb R} \{1, g\}$. 

Consider $f = ig$. Then 
$$f^2 = (ig)(ig) = (gi)(ig) = g(i (ig) ) = g (i^2 g) =g(-1\cdot g) = - g^2 = -(-1) = 1.$$

Let us now assume that  $\mathcal E= \{1, i, g, f\}$ form a basis for $\mathcal M$. Then  the multiplication table of $\mathcal M$ with respect to $\mathcal E$ is given by

\begin{equation}\label{eq3}
\begin{tabular}{c|cccc} 
          & $\bf 1$            & $\bf i$                                  & $\bf g$                                                 & $\bf f$      \\ 
\hline
$\bf 1$     &  $1$           & $i$                                  &  $g$                                                &   $f$     \\
$\bf i$      &  $i$            & $-1$                                &  $f$                                                &   $-g $ \\
$\bf g$     &  $g$          & $f$                                 &  $-1$                                               & $-i$   \\ 
$\bf f$     &  $f$           &  $-g$                              &  $-i$                & $1$     \\ 
\end{tabular} 
\end{equation}

Using \eqref{eq3}, one can easily confirm that the middle $\mathbb C$-associativity condition also holds with respect to $\mathcal C'$.
Recall that Lemma 1.2 from \cite{AHK} states that   a commutative strictly middle $\mathbb C$-associative algebra  has exactly {\it one} two-dimensional subalgebra with respect to which it is middle $\mathbb C$-associative. Hence, this case is not possible by the lemma.

If now  $\mathcal E$ is {\it not} a basis for $\mathcal M$, then $f$ can be expressed as a linear combination of $1, i, g$ (as they are linearly independent), then $S=\text{Span}_{\mathbb R}\{1, i, g\}$ is a 3-dimensional associative commutative subalgebra whose basis elements satisfy $i^2=g^2=-1$. 

Let now $f  = ig = \alpha 1 + \beta i + \gamma g$ for
some $\alpha, \beta, \gamma \in\mathbb R$. Multiplying this by $g$ from the right, we obtain
\begin{align*}
 fg & = (ig)g=\alpha g + \beta i g +\gamma g^2,\\
 i g^2& = \alpha g + \beta ig  - \gamma 1,\\
 -i &= \alpha g + \beta f - \gamma 1.   
\end{align*}
The latter implies that $\beta f = \gamma 1 - i - \alpha g$. On the other hand, $\beta f = \alpha \beta 1 + \beta^2 i +\beta \gamma g$. If follows that $\beta^2 = -1$, which is impossible.

This implies that the only elements  that satisfy the condition $x^2=-1$ are $\pm i$. It is straightforward to verify that $\mathcal M$ satisfies partial alternativity condition for $x=\pm i$.
Hence, by Definition \ref{PA},  $\mathcal M$ is partially alternative. The proof is complete. 
\end{proof}

\begin{example} A particular case of $\mathcal M$ with table \eqref{Tn} is an algebra obtained from $\mathcal M$ by setting $c=d=e=h=0$, $f=b$ and $g=-a$. These are real algebras  satisfying the left, right and middle $\mathbb C$-associativity conditions:   $$(z, x, y)=(x, z, y)=(x, y, z)=0$$ for any $z\in \mathcal C, x,y\in\mathcal M$.  In addition, if $b\neq 0$, then 
each such $\mathcal M$ is partially alternative but not alternative as $j^2 j \neq j j^2.$
\end{example}

\begin{example} There are noncommutative middle $\mathbb  C$-associative algebras that are not  partially alternative. For example, if we consider $\mathcal M$ with \eqref{Tn} and $(a, b, c, d) = (-1, 0, 0, 0)$,  $gh\neq 0$, then simple
computations show that $(jk)k \neq j (k^2)$ but $k^2=-1$. This violates partial alternativity condition.
\end{example}
%%%%%%%%%%%%%%%%%%%5
%%%%%%%%%%%%%%%%%%%
%%%%%%%%%%%%%%%%%%%%%%

The remainder of the  section presents a classification of all noncommutative middle $\mathbb C$-associative algebras satisfying the partial alternativity condition. As a starting point, we prove a few auxiliary lemmas.
\begin{lemma}\label{lemma1}
Let $\mathcal M$ be a noncommutative middle $\mathbb C$-associative algebra with multiplication table \eqref{Tn}. If $\mathcal M$ is partially left and right alternative, then  $\mathcal{I}_\mathcal{M}\subseteq \operatorname{Span}\{i,j,k\}$.
\end{lemma}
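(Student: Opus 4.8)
The plan is to write an arbitrary element as $q = x_0\cdot 1 + x_1 i + x_2 j + x_3 k$ with $x_\ell \in \mathbb R$ and to characterize when $q^2 = -1$ using only the table \eqref{Tn}. First I would expand $q^2$ and collect the coefficients of $1, i, j, k$. Writing $r := x_2^2 + x_3^2$, a direct computation shows that the symmetric pair $j^2 = k^2 = a1+bi+cj+dk$ makes the cross terms coming from $jk$ and $kj$ cancel (they enter with the symmetric coefficient $x_2x_3$ and $jk=-kj$), so that $q^2$ has coefficient of $1$ equal to $x_0^2 - x_1^2 + a r$ and coefficients of $i, j, k$ equal to $2x_0 x_1 + b r$, $2 x_0 x_2 + c r$, and $2 x_0 x_3 + d r$ respectively. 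Hence $q \in \mathcal I_{\mathcal M}$ is equivalent to four scalar equations, the last three being
\[ 2x_0 x_1 + b r = 0, \quad 2 x_0 x_2 + c r = 0, \quad 2 x_0 x_3 + d r = 0. \]
The conclusion $\mathcal I_{\mathcal M} \subseteq \operatorname{Span}\{i,j,k\}$ is exactly the assertion that every solution has $x_0 = 0$, so I would argue by contradiction, assuming $x_0 \neq 0$. If $r = 0$ then $q$ lies in $\mathcal C = \operatorname{Span}\{1,i\} \cong \mathbb C$, where the only square roots of $-1$ are $\pm i$, forcing $x_0 = 0$; so it remains to rule out $r \neq 0$.

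For the case $r \neq 0$ the key tool is the hypothesis itself: since $q \in \mathcal I_{\mathcal M}$, partial left and right alternativity give $(q,q,y) = 0$ and $(y,q,q) = 0$ for all $y \in \mathcal M$. Writing $q = x_0\cdot 1 + q'$ with $q' = x_1 i + x_2 j + x_3 k$, and using that any associator having $1$ in an entry vanishes, these reduce to $(q',q',y) = 0$ and $(y,q',q') = 0$. I would evaluate these at $y = i$: computing $(q')^2$ and the products $q'i, q'j, q'k, jq', kq'$ from \eqref{Tn}, one finds that both $(q',q',i)$ and $(i,q',q')$ are $r$ times fixed combinations of the structure constants. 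Setting them to zero (and using $r \neq 0$) yields $f = b$ and $g = -a$ from the $1$- and $i$-slots, while the $j$- and $k$-slots give $d+h=0=d-h$ and $e-c=0=c+e$, that is $c = d = e = h = 0$. With these simplifications the table collapses to $j^2 = k^2 = a1 + bi$ and $jk = -kj = b1 - ai$, and evaluating the remaining condition $(q',q',j) = 0$ produces a single term proportional to $b r\, k$; since $r \neq 0$, this forces $b = 0$.

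Having shown $b = c = d = 0$, I would return to the three imaginary-part equations, which now read $x_0 x_1 = x_0 x_2 = x_0 x_3 = 0$. As $x_0 \neq 0$, this gives $x_1 = x_2 = x_3 = 0$, whence $r = x_2^2 + x_3^2 = 0$, contradicting the standing assumption $r \neq 0$. Therefore no imaginary unit can have $x_0 \neq 0$, which is the claim.

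I expect the main obstacle to be the associator bookkeeping in the middle step rather than any conceptual difficulty. The computation stays manageable because of two structural features worth exploiting: reducing $(q,q,y)$ to $(q',q',y)$ strips off the scalar part of $q$, and, most importantly, the left condition at $y=i$ and the right condition at $y=i$ attach \emph{opposite} signs to the $c$- and $d$-type coefficients, so adding and subtracting them eliminates $c,d,e,h$ outright; only then does the single choice $y=j$ cleanly isolate $b$. Keeping the left/right pair together, rather than processing each associator in isolation, is what makes the elimination terminate.
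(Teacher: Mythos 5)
Your proof is correct, but it takes a genuinely different and substantially longer route than the paper's. I checked your claimed computations: with $q'=x_1i+x_2j+x_3k$ and $r=x_2^2+x_3^2$ one indeed gets $(q',q',i)=(f-b)r\,1+(a+g)r\,i+(d+h)r\,j+(e-c)r\,k$ and $(i,q',q')=(b-f)r\,1-(a+g)r\,i+(d-h)r\,j-(c+e)r\,k$, and, after the resulting simplifications $f=b$, $g=-a$, $c=d=e=h=0$, the evaluation $(q',q',j)=2br\,k$; so your elimination of the structure constants and the final feedback into the $q^2=-1$ coefficient equations all go through. The paper argues differently: it keeps the scalar part of $q=r1+xi+yj+zk$ inside the associators, using partial left alternativity as $q(qi)=(q^2)i=-i$ and partial right alternativity as $(iq)q=i(q^2)=-i$, hence $q(qi)=(iq)q$. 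In the $j$- and $k$-coefficients of these two products the structure-constant contributions are identical (both carry $-h(y^2+z^2)$, resp.\ $-e(y^2+z^2)$), while the cross terms between the scalar and imaginary parts enter with opposite signs ($2rz$ vs.\ $-2rz$, and $-2ry$ vs.\ $2ry$); equating them gives $ry=rz=0$ at once, so $r\neq 0$ forces $y=z=0$, $q\in\mathcal C$, $q=\pm i$, contradicting $r\neq0$. Note the irony: the reduction you single out as your main simplification --- stripping the scalar part so that $(q,q,y)=(q',q',y)$ --- is precisely what discards those cross terms, which are the only place where the left and right conditions ``see'' the coefficient $x_0$; having thrown them away, you can only harvest structure-constant information, which is why you need a second evaluation at $y=j$ and a return to the quadratic equations. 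What your route buys is a much tighter pinning-down of the multiplication table (essentially a strengthened form of the paper's Lemma \ref{lemma2}, albeit under a hypothesis that turns out to be vacuous); what the paper's route buys is brevity: a single product comparison, no structure-constant elimination, and no use of the $q^2=-1$ coefficient equations beyond the trivial fact that square roots of $-1$ in $\mathbb C$ are $\pm i$.
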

\begin{proof}
Assume there exists $q=r1+xi+yj+zk$ for $r,x,y,z\in\mathbb R$ such that $q^2=-1$ and $r\neq0$. 

Since $\mathcal{M}$ is left  and right partially alternative, we have that $$q(qi)=(q^2)i=-i=i(q^2)=(iq)q.$$ Using multiplication \eqref{Tn}, we obtain
$$q(qi)=  r' 1 + x' i + y' j + z' k,\quad
\text{and}\quad
(iq)q = r' 1 + x' i + y'' j + z'' k,$$
where 

\begin{equation*}
\begin{aligned}[c]
&r'=-2rx-f(y^2+z^2),\\
&x' = r^2-x^2-g(y^2+z^2),\\
&y' = 2rz-h(y^2+z^2),
\end{aligned}
\qquad
\begin{aligned}[c]
&y''= -2rz - h(y^2+z^2),\\
&z' = -2ry-e(y^2+z^2),\\
&z''= 2ry-e(y^2+z^2).
\end{aligned}
\end{equation*}
It follows that $y'=y''$ and $z'=z''$, and, therefore,  $rz=0$ and $ry=0$. 

By the assumption, $r\neq 0$, we have $y=z=0$. Then $q\in \mathcal C =  \operatorname{Span}\{1, i\}$ satisfying $q^2=-1$. Clearly, $q=\pm i$  which contradicts $r\neq 0$. 
Therefore, $\mathcal{I}_\mathcal{M}\subseteq \operatorname{Span}\{i,j,k\},$ as needed.
\end{proof}
\begin{lemma}\label{lemma2}
Let $\mathcal M$ be as in Lemma \ref{lemma1}.  If $\mathcal{I}_\mathcal{M}\setminus\mathcal{C}\neq\varnothing$, then  $b=c=d=0$ in \eqref{Tn} and 
$\mathcal{I}_{\mathcal{M}}=\{xi+yj+zk\mid -x^2+a(y^2+z^2)=-1\}$.
\end{lemma}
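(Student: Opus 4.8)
The plan is to exploit Lemma~\ref{lemma1} to pin down the shape of any imaginary unit, then compute its square directly from the multiplication table \eqref{Tn} and read off the constraints. By Lemma~\ref{lemma1}, every element of $\mathcal{I}_\mathcal{M}$ has the form $q = xi+yj+zk$ with $x,y,z\in\mathbb{R}$, so I only need to analyze when such a $q$ satisfies $q^2=-1$.

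First I would expand $q^2$ using the products recorded in \eqref{Tn}. The key structural observation is that $jk = -kj = f1+gi+hj+ek$, so in the expansion the mixed terms $yz\,(jk)$ and $zy\,(kj)$ cancel exactly; likewise the contributions from $ij=-ji$ and $ik=-ki$ cancel. After collecting the surviving terms one obtains the clean expression
\[
q^2 = \bigl[-x^2 + a(y^2+z^2)\bigr]\,1 + b(y^2+z^2)\,i + c(y^2+z^2)\,j + d(y^2+z^2)\,k .
\]
Imposing $q^2=-1$ then splits into the single scalar equation $-x^2+a(y^2+z^2)=-1$ together with the three vanishing conditions $b(y^2+z^2)=c(y^2+z^2)=d(y^2+z^2)=0$.

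Next I would invoke the hypothesis $\mathcal{I}_\mathcal{M}\setminus\mathcal{C}\neq\varnothing$. This furnishes an imaginary unit $q_0 = xi+yj+zk$ with $(y,z)\neq(0,0)$, hence $y^2+z^2>0$. Cancelling this positive factor in the three vanishing conditions forces $b=c=d=0$, which is the first assertion. With $b=c=d=0$ in hand, the $i$-, $j$-, and $k$-components of $q^2$ vanish automatically for \emph{every} $q=xi+yj+zk$, so the equation $q^2=-1$ reduces precisely to $-x^2+a(y^2+z^2)=-1$. Combining this with the containment $\mathcal{I}_\mathcal{M}\subseteq\operatorname{Span}\{i,j,k\}$ from Lemma~\ref{lemma1} yields both inclusions and hence the stated description $\mathcal{I}_{\mathcal{M}}=\{xi+yj+zk\mid -x^2+a(y^2+z^2)=-1\}$.

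I expect the only delicate point to be the bookkeeping in the expansion of $q^2$, in particular verifying that all antisymmetric cross terms cancel so that the surviving coefficients depend only on the combination $y^2+z^2$. Once that cancellation is confirmed, the extraction of $b=c=d=0$ and the final characterization are immediate.
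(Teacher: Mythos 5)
Your proposal is correct and follows essentially the same route as the paper's proof: expand $q^2$ via the table \eqref{Tn} (noting the cancellation of the antisymmetric cross terms), extract the four scalar equations, use an imaginary unit outside $\mathcal{C}$ to cancel the positive factor $y^2+z^2$ and force $b=c=d=0$, and then read off the characterization of $\mathcal{I}_{\mathcal{M}}$. In fact you supply the expansion details and the verification of both inclusions that the paper leaves implicit.
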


\begin{proof}
Consider $q=xi+yj+zk \in  \mathcal{I}_{\mathcal{M}}$.  Computing $q^2$ in terms of $x, y, z$ and equating to $-1$ we get the following equations:

\begin{equation*}
\begin{aligned}[c]
-&x^2+a(y^2+z^2)=-1\\
& b(y^2+z^2) =0
\end{aligned}
\qquad
\begin{aligned}[c]
& c(y^2+z^2) =0 \\
& d(y^2+z^2) =0
\end{aligned}
\end{equation*}

Since $\mathcal{I}_\mathcal{M} \setminus \mathcal{C}\neq \varnothing$, we can choose $q \in \mathcal{I}_\mathcal{M}\setminus\mathcal{C}$ such that  $y^2 + z^2 \neq 0$. Then it follows 
from the above equations that $b=c=d=0$ and  
$$\mathcal{I}_{\mathcal{M}}=\{xi+yj+zk\mid -x^2+a(y^2+z^2)=-1\}.$$
The proof is complete.
\end{proof}

\begin{example}
Let us consider the following three examples of partially alternative algebras given by specific multiplication tables.
\begin{enumerate}[(1)]
    \item The algebra $\mathcal{M}^+$ is defined by the multiplication table
\begin{center}
$$
\begin{tabular}{ c|c c c c }
 $\cdot $ & $1$ & $i$ & $j$ & $k$ \\ \hline
 $1$ & $1$ & $i$ & $j$ & $k$\\  
 $i$ & $i$ & $-1$ & $k$ & $-j$ \\
 $j$ & $j$ & $-k$ & $1$ & $-i$\\
 $k$ & $k$ & $j$ & $i$ & $1$
\end{tabular}
$$
\end{center}
The imaginary units in $\mathcal{M}^+$ are given by
$$\mathcal{I}_{\mathcal{M}^+}=\{(0,x,y,z)\mid -x^2+y^2+z^2=-1\}$$
and form a \textbf{hyperboloid of two sheets} in $\operatorname{Span}\{i,j,k\}$.
\item The algebra $\mathcal{M}^0$ is defined by the multiplication table
\begin{center}$$
\begin{tabular}{ c|c c c c }
 $\cdot $ & $1$ & $i$ & $j$ & $k$ \\ \hline
 $1$ & $1$ & $i$ & $j$ & $k$\\  
 $i$ & $i$ & $-1$ & $k$ & $-j$ \\
 $j$ & $j$ & $-k$ & $0$ & $0$\\
 $k$ & $k$ & $j$ & $0$ & $0$
\end{tabular}$$
\end{center}
The imaginary units in $\mathcal{M}^0$ are given by
$$\mathcal{I}_{\mathcal{M}^0}=\{(0,x,y,z)\mid x^2=1\}$$
and form two \textbf{real parallel planes} in $\operatorname{Span}\{i,j,k\}$.
\item The quaternion algebra $\mathbb H$ is defined by the standard multiplication table
\begin{center}$$
\begin{tabular}{ c|c c c c }
 $\cdot $ & $1$ & $i$ & $j$ & $k$ \\ \hline
 $1$ & $1$ & $i$ & $j$ & $k$\\  
 $i$ & $i$ & $-1$ & $k$ & $-j$ \\
 $j$ & $j$ & $-k$ & $-1$ & $i$\\
 $k$ & $k$ & $j$ & $-i$ & $-1$
\end{tabular}$$
\end{center}
The imaginary units in $\mathbb{H}$ are given by
$$\mathcal{I}_{\mathbb{H}}=\{(0,x,y,z)\mid x^2+y^2+z^2=1\}$$
and form a \textbf{sphere} in $\operatorname{Span}\{i,j,k\}$.
\end{enumerate}

% (1)  Algebra $\mathcal{M}^+$ is defined by the multiplication table
% \begin{center}
% $$
% \begin{tabular}{ c|c c c c }
%  $\cdot $ & $1$ & $i$ & $j$ & $k$ \\ \hline
%  $1$ & $1$ & $i$ & $j$ & $k$\\  
%  $i$ & $i$ & $-1$ & $k$ & $-j$ \\
%  $j$ & $j$ & $-k$ & $1$ & $-i$\\
%  $k$ & $k$ & $j$ & $i$ & $1$
% \end{tabular}
% $$
% \end{center}
% The imaginary units in $\mathcal{M}^+$ are given by
% $$\mathcal{I}_{\mathcal{M}^+}=\{(0,x,y,z)\mid -x^2+y^2+z^2=-1\}$$
% and form a \textbf{hyperboloid of two sheets} in $\operatorname{Span}\{i,j,k\}$.

% (2)  Algebra $\mathcal{M}^0$ is defined by the multiplication table
% \begin{center}$$
% \begin{tabular}{ c|c c c c }
%  $\cdot $ & $1$ & $i$ & $j$ & $k$ \\ \hline
%  $1$ & $1$ & $i$ & $j$ & $k$\\  
%  $i$ & $i$ & $-1$ & $k$ & $-j$ \\
%  $j$ & $j$ & $-k$ & $0$ & $0$\\
%  $k$ & $k$ & $j$ & $0$ & $0$
% \end{tabular}$$
% \end{center}
% The imaginary units in $\mathcal{M}^0$ are given by
% $$\mathcal{I}_{\mathcal{M}^0}=\{(0,x,y,z)\mid x^2=1\}$$
% and form two \textbf{real parallel planes} in $\operatorname{Span}\{i,j,k\}$.

% (3)  Quaternion algebra $\mathbb H$ is defined by the standard multiplication table
% \begin{center}$$
% \begin{tabular}{ c|c c c c }
%  $\cdot $ & $1$ & $i$ & $j$ & $k$ \\ \hline
%  $1$ & $1$ & $i$ & $j$ & $k$\\  
%  $i$ & $i$ & $-1$ & $k$ & $-j$ \\
%  $j$ & $j$ & $-k$ & $-1$ & $i$\\
%  $k$ & $k$ & $j$ & $-i$ & $-1$
% \end{tabular}$$
% \end{center}
% The imaginary units in $\mathbb{H}$ are given by
% $$\mathcal{I}_{\mathbb{H}}=\{(0,x,y,z)\mid x^2+y^2+z^2=1\}$$
% and form a \textbf{sphere} in $\operatorname{Span}\{i,j,k\}$.

\end{example}

\begin{theorem}
Let $\mathcal M$ be a noncommutative middle $\mathbb C$-associative algebra. Then the following affirmations are equivalent:
\begin{itemize}
    \item[(1)] $\mathcal{M}$ is associative;
    \item[(2)] $\mathcal{M}$ is partially left alternative and right alternative with $\mathcal{I}_\mathcal{M}\setminus\mathcal{C}\neq\varnothing$;
    \item[(3)] $\mathcal{M}$ is isomorphic to one of the aforementioned three algebras: $\mathcal{M}^+$, $\mathcal{M}^0$, and $\mathbb H$.
\end{itemize}
\end{theorem}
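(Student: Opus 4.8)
The plan is to establish the cycle $(1)\Rightarrow(2)\Rightarrow(3)\Rightarrow(1)$, concentrating the real work in $(2)\Rightarrow(3)$ and treating the other two implications by direct computation.

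\emph{Step 1: $(2)\Rightarrow(3)$.} I would start from the canonical table \eqref{Tn}. Since $\mathcal M$ is partially left and right alternative with $\mathcal I_\mathcal M\setminus\mathcal C\neq\varnothing$, Lemma~\ref{lemma2} applies and gives $b=c=d=0$ together with $\mathcal I_\mathcal M=\{xi+yj+zk\mid -x^2+a(y^2+z^2)=-1\}$. The crucial step is to evaluate a \emph{single} associator. For an arbitrary imaginary unit $q=xi+yj+zk$, using $q^2=-1$ and then substituting the relation $x^2-1=a(y^2+z^2)$ defining the surface, a direct expansion should collapse to $(q,q,i)=(y^2+z^2)\,[\,f\,1+(a+g)\,i+h\,j+e\,k\,]$. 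Because hypothesis $(2)$ guarantees an imaginary unit with $y^2+z^2\neq0$, partial left alternativity then forces $f=h=e=0$ and $g=-a$, reducing \eqref{Tn} to the one-parameter family $\mathcal M_a$ with $j^2=k^2=a\,1$, $jk=-a\,i$, $kj=a\,i$.

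\emph{Step 2: classification of $\mathcal M_a$.} To finish $(2)\Rightarrow(3)$ I would classify $\mathcal M_a$ up to isomorphism. Rescaling $j\mapsto\lambda j$ and $k\mapsto\lambda k$ with $\lambda\neq0$ preserves $ij=k$ and $ik=-j$ and sends $a\mapsto\lambda^2 a$; since $\lambda^2>0$, only the sign of $a$ is an invariant. Hence $a>0$ normalizes to $\mathcal M^+$ (take $a=1$), $a<0$ to $\mathbb H$ (take $a=-1$), and $a=0$ is exactly $\mathcal M^0$.

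\emph{Step 3: $(3)\Rightarrow(1)$ and $(1)\Rightarrow(2)$.} The first is a routine verification that the associator vanishes on all basis triples of each of $\mathcal M^+,\mathcal M^0,\mathbb H$, equivalently that every $\mathcal M_a$ is associative. For $(1)\Rightarrow(2)$, associativity makes all associators vanish, so partial left and right alternativity are automatic; to produce an imaginary unit outside $\mathcal C$ I would use associativity in \eqref{Tn} directly, since the associators $(i,j,j)$, $(j,j,i)$, $(j,k,k)$ force $b=c=d=e=h=f=0$ and $g=-a$, so again $j^2=a\,1$, and the surface $-x^2+a(y^2+z^2)=-1$ visibly contains points with $y^2+z^2\neq0$ for every value of $a$.

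\emph{Main obstacle.} The delicate point is that partial alternativity is a priori far weaker than associativity, yet hypothesis $(2)$ already pins down the entire structure. I expect this to be resolved by the observation that the alternativity identity must hold over the whole two-dimensional surface of imaginary units: letting $q$ range over that surface converts the single vector equation $(q,q,i)=0$ into all four scalar constraints simultaneously, which is precisely what makes the weaker hypothesis suffice.
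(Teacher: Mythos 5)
Your proposal is correct, and I verified your key identity: with $b=c=d=0$ and $q=xi+yj+zk$ satisfying $x^2-1=a(y^2+z^2)$, a direct expansion indeed gives $(q,q,i)=(y^2+z^2)\bigl[f\,1+(a+g)\,i+h\,j+e\,k\bigr]$, so one imaginary unit off $\mathcal C$ kills $f,h,e$ and pins $g=-a$ all at once. Your route differs from the paper's in two ways. First, the paper does not prove $(1)\Leftrightarrow(3)$ at all; it cites Theorem 1.10 of \cite{AHK}, and only proves $(2)\Rightarrow(3)$ and $(3)\Rightarrow(2)$, whereas you prove the full cycle $(1)\Rightarrow(2)\Rightarrow(3)\Rightarrow(1)$ from scratch, including deriving $b=c=d=e=f=h=0$, $g=-a$ from associativity via the associators $(i,j,j)$, $(j,j,i)$, $(j,k,k)$ (I checked: the first two give $f=b$, $g=-a$, $h=\pm d$, $e=\mp c$, hence $c=d=e=h=0$, and the third gives $b=0$). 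Second, inside $(2)\Rightarrow(3)$ the paper first rescales to normalize $a\in\{1,0,-1\}$ and then evaluates $(q,q,i)$ at hand-picked imaginary units ($\sqrt2\,i+j$, $i+j$, $j$ respectively), whereas you evaluate the associator at a \emph{general} imaginary unit and only normalize $a$ afterwards in a separate classification step for the family $\mathcal M_a$. What each buys: your argument is self-contained and the unified identity makes transparent why the weak hypothesis (a single imaginary unit outside $\mathcal C$) suffices; the paper's is shorter because it leans on the known classification of associative middle $\mathbb C$-associative algebras and avoids verifying associativity of $\mathcal M^+$, $\mathcal M^0$, $\mathbb H$ (which in your scheme is the routine but necessary step $(3)\Rightarrow(1)$).
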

\begin{proof}
$(1) \iff (3)$. This is Theorem 1.10 in \cite{AHK}.

%$(1)\implies (2)$. Since $\mathcal{M}$ is associative, it is  left alternative, flexible, and right alternative. By Lemma 1, $\mathcal{I_M}$ is a subset of $\operatorname{Span}\{i,j,k\}$.

$(2)\implies (3)$. By Lemma \ref{lemma2}, $b=c=d=0$ and 
$$\mathcal{I}_{\mathcal{M}}=\{xi+yj+zk\mid -x^2+a(y^2+z^2)=-1\}.$$

If $a>0$, then a change of basis by $1'=1$, $i'=i$, $j'=(1/\sqrt{a})j$, and $k'=(1/\sqrt{a})k$ allows us to assume $a=1$. Similarly, if $a<0$, then a change of basis by $1'=1$, $i'=i$, $j'=(1/\sqrt{-a})j$, and $k'=(1/\sqrt{-a})k$ allows us to assume $a=-1$. 
Consider the following three cases:

 Let $a=1$. It is easy to see that $q=\sqrt{2}i+j\in\mathcal{I}_\mathcal{M}$. Then,
$$-i=(qq)i=q(qi)=(-f)1+(-g-2)i+(-h)j+(-e)k.$$
It follows that $f=0$, $g=-1$, $h=0$, $e=0$. Thus, $\mathcal M\cong \mathcal M^+.$

 Let $a=0$. Choose $q=i+j\in\mathcal{I}_\mathcal{M}$. Then,
$$-i=(qq)i=q(qi)=(-f)1+(-g-1)i+(-h)j+(-e)k.$$
It follows that $f=0$, $g=0$, $h=0$, $e=0$. Thus, $\mathcal M\cong \mathcal M^0.$

 Let $a=-1$. Choose $q=j\in\mathcal{I}_\mathcal{M}$. Then,
$$-i=(qq)i=q(qi)=(-f)1+(-g)i+(-h)j+(-e)k.$$
It follows that $f=0$, $g=1$, $h=0$, $e=0$. In particular, this algebra is isomorphic to the quaternion algebra $\mathbb H$ with the usual basis $\{1,i,j,k\}$.

$(3) \implies (2)$ This follows from the explicit multiplication tables of $\mathcal{M}^+$, $\mathcal{M}^0$, and $\mathbb H.$ Namely, each of these algebras is partially left and right alternative and, in addition, they have imaginary units outside $\mathcal C$.

The proof is complete.
\end{proof}

If we omit the condition $\mathcal{I}_\mathcal{M}\setminus\mathcal{C}\neq\varnothing$ (meaning that $I_M={i,-i}$) from the statement of the theorem, then this case becomes very difficult to classify. The corresponding algebras are no longer isomorphic to any of those listed above, since all three algebras given by the tables have additional imaginary units (as shown). In particular, they are also not associative.

\section {Partially alternative division algebras}

Let $\mathcal{A}$ be an arbitrary non-associative algebra over a field $\mathbb{F}$ whose unit element is denoted by 1. Let  $L_a, R_a: \mathcal{A}\rightarrow \mathcal{A}$ be the linear operators of left and right multiplication defined, respectively, by $L_a(x)=ax,$ $R_a(x)=xa$ for all $x\in\mathcal{A}.$ If $L_a, R_a$ are bijective for every nonzero element $a$ in $\mathcal{A}, $ then $\mathcal{A}$ is said to be a {\em division algebra}. As usual,  $Id_\mathcal{A}$ and $Aut(\mathcal{A})$ stand for the identity operator of $\mathcal A$ and the group of all automorphisms of $\mathcal{A},$ respectively.
\begin{definition}
Let $\mathcal A$ be an algebra over $\mathbb R$, and $f \in Aut(\mathcal{A}).$  If $f\neq Id_{\mathcal A}$ and  $f^2=Id_{\mathcal A}$, then 
$f$ is called a \emph{reflection} of $\mathcal A$. In other words, a reflection  is an automorphism of $\mathcal A$ of order two.

\end{definition} 
%%%%%%%%%%%%%%%%%%%%%%%%%
%%%%%%%%%%%%%%%%%%%%%%%
A key result in the theory of real division algebras identifies that the only possible dimensions for finite-dimensional real division algebras are 1, 2, 4, and 8 (see for example \cite{HKR}). The classification of algebras of dimension 
$d$ has been fully established only for $d=1$ (\cite{Rod}), $d=2$ (\cite{HP, D, AK}), and partially completed for $d\in\{4,8\}.$ Specifically, the classification for dimension 4 was achieved for absolute-valued algebras, 
power-commutative algebras and algebras whose derivation Lie algebra is isomorphic to $\mathfrak{su}(2)$ (see \cite{Rod,Ram,CKMMRR,ED2,BO}). In the present paper, we further explore the four-dimensional case under the assumption that the algebra in question is partially alternative.

One intriguing open question in this field concerns whether every four-dimensional real division algebra $\mathcal{A}$, with a nontrivial automorphism group $Aut(\mathcal{A})$, possesses a reflection. If such a property were proven, it would provide invaluable insights into the underlying algebraic structures of these algebras. Currently, no example exists of a four-dimensional real division algebra whose nontrivial automorphism group lacks a reflection. In the specific case of absolute valued algebras, a positive answer has been established in \cite{DDR}. Therefore, our assumption that reflections exist in $Aut(\mathcal{A})$ appears well-founded.

For a linear mapping $\varphi: \mathcal A\to \mathcal A$,  we denote by $\mathcal E_{\lambda}(\varphi)$  the eigenspace in $\mathcal A$ corresponding to the eigenvalue $\lambda\in \mathbb R$. 
The following useful result is taken from \cite{DDR}:

\begin{lemma}\label{lemma3} Let $\varphi \in Aut(\mathcal{A}).$ Then $\mathcal E_1(\varphi),$ $\mathcal E_1(\varphi)+ \mathcal E_{-1}(\varphi)$ and $\mathcal E_1(\varphi^2)$ are subalgebras of $\mathcal{A}$ satisfying 
\[ \mathcal E_1(\varphi)\subseteq \mathcal  E_1(\varphi)+ \mathcal E_{-1}(\varphi)\subseteq \mathcal E_1(\varphi^2). \]

In addition, if  $\varphi\neq Id_\mathcal{A}$, then the following statements are equivalent

\begin{enumerate}[(1)]
 \item $\varphi$ is a reflection of $\mathcal{A},$ 
\item $\varphi$ is diagonalizable, 
\item $\mathcal{A}=\mathcal E_1(\varphi)\oplus \mathcal E_{-1}(\varphi).$
 \end{enumerate} 
 \end{lemma}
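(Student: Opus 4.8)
The plan is to dispatch the three subalgebra assertions and the inclusion chain first, as these use only that $\varphi$ is an algebra homomorphism, and then to prove the equivalence via the cycle $(1)\Rightarrow(3)\Rightarrow(2)\Rightarrow(1)$, reserving $(2)\Rightarrow(1)$ as the one substantive step. For the structural part I would record that whenever $\psi\in\operatorname{Aut}(\mathcal{A})$ the fixed space $\mathcal{E}_1(\psi)$ is a subalgebra: $\psi(x)=x$ and $\psi(y)=y$ give $\psi(xy)=\psi(x)\psi(y)=xy$. Applying this to $\psi=\varphi$ and to $\psi=\varphi^2\in\operatorname{Aut}(\mathcal{A})$ handles $\mathcal{E}_1(\varphi)$ and $\mathcal{E}_1(\varphi^2)$. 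For $\mathcal{E}_1(\varphi)+\mathcal{E}_{-1}(\varphi)$ I would use a $\mathbb{Z}/2$-grading: from $\varphi(xy)=\varphi(x)\varphi(y)$, the product of an $\varepsilon$-eigenvector by an $\varepsilon'$-eigenvector ($\varepsilon,\varepsilon'\in\{1,-1\}$) is an $\varepsilon\varepsilon'$-eigenvector, so bilinearity shows the subspace is closed under multiplication. The inclusion chain is then immediate: the first inclusion is trivial, and since $\varphi^2$ fixes every vector of $\mathcal{E}_1(\varphi)$ and of $\mathcal{E}_{-1}(\varphi)$, both eigenspaces, hence their sum, lie in $\mathcal{E}_1(\varphi^2)$.

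For the equivalences (with $\varphi\neq Id_{\mathcal{A}}$), I would first prove $(1)\Rightarrow(3)$: a reflection satisfies $\varphi^2=Id_{\mathcal{A}}$, so $(t-1)(t+1)$ annihilates $\varphi$; being a product of distinct linear factors, the minimal polynomial is squarefree, whence $\varphi$ is diagonalizable with eigenvalues in $\{1,-1\}$ and $\mathcal{A}=\mathcal{E}_1(\varphi)\oplus\mathcal{E}_{-1}(\varphi)$. The implication $(3)\Rightarrow(2)$ is immediate, as a decomposition into eigenspaces exhibits a basis of eigenvectors.

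The core of the argument is $(2)\Rightarrow(1)$, where the division hypothesis is essential. If $\varphi$ is diagonalizable over $\mathbb{R}$ then all its eigenvalues are real, and I would show each such $\lambda$ equals $\pm1$. Take $v\neq 0$ with $\varphi(v)=\lambda v$; note $\lambda\neq 0$ since $\varphi$ is invertible. Because $\mathcal{A}$ is a division algebra, $v\neq 0$ forces $v^2=L_v(v)\neq 0$, and $\varphi(v^2)=\varphi(v)^2=\lambda^2 v^2$. Iterating the squaring map yields nonzero eigenvectors $v,\,v^2,\,(v^2)^2,\dots$ with eigenvalues $\lambda,\lambda^2,\lambda^4,\dots$; if $|\lambda|\neq 1$ these eigenvalues are pairwise distinct, producing infinitely many linearly independent eigenvectors and contradicting $\dim\mathcal{A}<\infty$. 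Hence $|\lambda|=1$, and being real $\lambda=\pm1$. Thus $\varphi$ acts as $\pm1$ on a basis of eigenvectors, so $\varphi^2=Id_{\mathcal{A}}$, and since $\varphi\neq Id_{\mathcal{A}}$ it is a reflection.

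I expect $(2)\Rightarrow(1)$ to be the only real obstacle. Diagonalizability by itself does not force a reflection in an arbitrary algebra — an automorphism with an eigenvalue such as $2$ exists as soon as nonzero square-zero elements are present — so the proof must genuinely invoke that $\mathcal{A}$ is a finite-dimensional division algebra through the property that nonzero elements have nonzero squares. (In the absolute-valued setting of \cite{DDR} one may instead invoke that automorphisms are orthogonal, so that real eigenvalues automatically have modulus $1$; the squaring argument above is the self-contained analogue valid for any finite-dimensional real division algebra.)
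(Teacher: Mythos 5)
Your proof is correct, but there is nothing in the paper to compare it against step by step: the authors do not prove this lemma at all — it is quoted verbatim from \cite{DDR} (``The following useful result is taken from...''), and no internal argument is given. Your write-up is therefore a self-contained replacement for the citation, and a sound one. The subalgebra claims and the inclusion chain use only the homomorphism property, exactly as you argue, and your $\mathbb{Z}/2$-grading observation is the right mechanism for showing $\mathcal{E}_1(\varphi)+\mathcal{E}_{-1}(\varphi)$ is closed under multiplication; $(1)\Rightarrow(3)\Rightarrow(2)$ is the standard minimal-polynomial argument. You also correctly isolate $(2)\Rightarrow(1)$ as the only step needing a hypothesis beyond ``automorphism'': diagonalizability alone does not force an involution, and your squaring iteration — $v,\,v^2,\,(v^2)^2,\dots$ are nonzero eigenvectors (nonzero because $L_v$ is injective in a division algebra) with eigenvalues $\lambda,\lambda^2,\lambda^4,\dots$, pairwise distinct unless $|\lambda|=1$ — cleanly pins every real eigenvalue to $\pm 1$ in any finite-dimensional real division algebra. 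This generality is in fact a small gain over the paper's citation: \cite{DDR} works in the absolute-valued setting, where automorphisms are isometries and the eigenvalue bound comes for free, whereas the present paper applies the lemma to arbitrary four-dimensional unital real division algebras; your argument covers exactly that setting with no appeal to a norm.
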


Let $\varphi$ be a reflection of $\mathcal A$. Set $\mathcal B =  \mathcal E_1(\varphi)$ and $\mathcal C = \mathcal E_{-1}(\varphi)$.  By Lemma \ref{lemma3}, we have that  $\mathcal A = \mathcal B \oplus \mathcal C$ where $\mathcal B$ is a subalgebra of $\mathcal A$ and 
\begin{equation}\label{eq1}
    \mathcal B \mathcal C = \mathcal C \mathcal B = \mathcal C,\,\, \mathcal C \mathcal C =\mathcal B.
\end{equation} %\mathcal B \mathcal C = \mathcal C \mathcal B = \mathcal C,\,\, \mathcal C \mathcal C =\mathcal B. \eqno (1)$$

We note that the above equalities follow from Lemmas 1 and 2 from \cite{DDR}.

\begin{lemma} \label{lemma4}
Assume that $\mathcal A$ is a four-dimensional unital real division algebra with a reflection $\varphi$. Let  $\mathcal A = \mathcal B \oplus \mathcal C$ where $\mathcal B$ and $\mathcal C$ are as above. Then both
$\mathcal B$ and  $ \mathcal C$ are two-dimensional subspaces. Moreover, $\mathcal B$ is isomorphic to the algebra of complex numbers.
\end{lemma}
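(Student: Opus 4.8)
The plan is to establish the dimension claim first, using only the division property together with the product relations \eqref{eq1}, and then to identify $\mathcal B$ by a completing-the-square argument. For the dimensions, I would fix any nonzero $c \in \mathcal C$; such an element exists because $\varphi \neq Id_{\mathcal A}$ forces $\mathcal C = \mathcal E_{-1}(\varphi) \neq 0$. Since $\mathcal A$ is a division algebra, the left multiplication operator $L_c$ is injective. By \eqref{eq1} we have $c\mathcal B \subseteq \mathcal C\mathcal B = \mathcal C$ and $c\mathcal C \subseteq \mathcal C\mathcal C = \mathcal B$, so $L_c$ restricts to injective linear maps $\mathcal B \to \mathcal C$ and $\mathcal C \to \mathcal B$. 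These yield $\dim \mathcal B \le \dim \mathcal C$ and $\dim \mathcal C \le \dim \mathcal B$, hence $\dim \mathcal B = \dim \mathcal C$; since $\dim \mathcal A = 4$, each is two-dimensional.

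To identify $\mathcal B$, I would first note that $1 \in \mathcal B$: any automorphism of a unital algebra fixes the unit (from surjectivity, $\varphi(1)$ is a two-sided unit, hence equal to $1$), so $\varphi(1)=1$ and thus $1 \in \mathcal E_1(\varphi) = \mathcal B$. Next I would check that $\mathcal B$ is itself a division algebra: for nonzero $a \in \mathcal B$ the operator $L_a$ is injective on $\mathcal A$, and since $\mathcal B$ is a subalgebra $L_a(\mathcal B) \subseteq \mathcal B$, so $L_a|_{\mathcal B}$ is an injective endomorphism of the finite-dimensional space $\mathcal B$, hence bijective; the same holds for $R_a$. Being two-dimensional and unital, $\mathcal B$ is generated by $1$ together with any $u$ for which $\{1,u\}$ is a basis, so all products are polynomials in $u$ and $\mathcal B$ is automatically commutative and associative.

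Finally, writing $u^2 = \alpha 1 + \beta u$ and replacing $u$ by $v = u - (\beta/2)\,1$ yields $v^2 = \gamma 1$ with $\gamma = \alpha + \beta^2/4 \in \mathbb R$, while $\{1, v\}$ remains a basis. If $\gamma > 0$ then $(v - \sqrt{\gamma}\,1)(v+\sqrt{\gamma}\,1)=0$ produces a zero divisor, and if $\gamma = 0$ then $v^2 = 0$ does as well, both contradicting that $\mathcal B$ is a division algebra. Hence $\gamma < 0$, and setting $e = v/\sqrt{-\gamma}$ gives $e^2 = -1$, so $\mathcal B = \mathbb R 1 \oplus \mathbb R e \cong \mathbb C$.

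I expect the only genuinely delicate point to be the verification that $\mathcal B$ inherits the division property, since that is what legitimizes the zero-divisor contradictions in the last step; the dimension count falls out immediately from \eqref{eq1} and injectivity of $L_c$, and the completing-the-square normalization is routine.
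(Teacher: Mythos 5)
Your proposal is correct and takes essentially the same route as the paper: the dimension count via an injective left-multiplication operator $L_c$, $c\in\mathcal C\setminus\{0\}$, which swaps $\mathcal B$ and $\mathcal C$ by \eqref{eq1}, is exactly the paper's argument, and the identification of $\mathcal B$ with $\mathbb C$ is the same conclusion. The only difference is that the paper simply cites the well-known fact that a two-dimensional unital real division algebra is isomorphic to $\mathbb C$, whereas you prove it from scratch (division property of $\mathcal B$, automatic commutativity and associativity in dimension two, completing the square, and the zero-divisor contradictions) --- a harmless, self-contained elaboration of the cited fact.
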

\begin{proof}
 Clearly, $1\in \mathcal B$ since $\varphi(1) = 1$. Hence, $\mathcal B$ is a unital subalgebra of $\mathcal A$ which 
is itself division. In turn, this implies that $\dim(\mathcal B)$ is either 1 or 2 (it cannot be 4 as $\varphi$ is a reflection). In particular, there exists a
 nonzero $a\in \mathcal C$ such that the corresponding left-multiplication operator $L_a: \mathcal A \to \mathcal A$ satisfies $L_a(\mathcal B)\subseteq \mathcal C$ and $L_a(\mathcal C)\subseteq \mathcal B$.
Since $L_a$ is bijective, we conclude that $\dim (\mathcal B) = \dim (\mathcal C) = \frac{1}{2} \dim (\mathcal A) = 2$.  

% If $\dim (\mathcal B) = 1$, then $\dim(\mathcal C) = 3$.   In particular, we can choose two linearly independent elements from $\mathcal C$, say, 
%$w_1$ and $w_2$. Since $\mathcal C \mathcal C =\mathcal B,$ we have that $w_1^2$ and $w_1w_2$ are in $\mathcal B$. By assumption, they must be linearly dependent, and, therefore, there exist $\alpha, \beta \in\mathbb R$ (not both zero) such 
%that $\alpha w_1^2 + \beta w_1 w_2 = 0$. Putting $w_1$ outside the parenthesis, we get $w_1(\alpha w_1 + \beta w_2)=0$. Recall that $\mathcal A$ is division, and since $w_1\neq 0$, we get $\alpha w_1 +\beta w_2 =0$, which contradicts to the assumption of independence. 
% Hence, $\dim (\mathcal B)=\dim (\mathcal C) = 2$. 
It is  well known  that a two-dimensional unital real division algebra is necessarily isomorphic to the algebra of complex numbers $\mathbb C$. Hence, 
 $\mathcal B \cong \mathbb C$.
\end{proof}
In what follows we assume that $\mathcal A$ is a  four-dimensional unital real division algebra  $\mathcal A$ with a reflection $\varphi$.  As shown above, $\mathcal A =\mathcal B\,\oplus\, \mathcal C$ where   $\mathcal B$ isomorphic to $\mathbb C$.
Hence, we can choose a basis $\{ 1, i, w, v\}$ for $\mathcal A$ as follows:  $\mathcal B =\text{Span}_{\mathbb R}\{1, i\}$, $\mathcal C=\text{Span}_{\mathbb R}\{w, v\}$,  $i^2=-1$ and $v=wi$.  

\begin{lemma}\label{lemma5} Assume that $\mathcal A$ has a basis $\{1, i, w, v\}$  as above. In addition, we assume that $\mathcal A$ is partially alternative.
Then  either $iw= wi$ or $iw=-wi$.

\end{lemma}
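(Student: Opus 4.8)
The plan is to exploit the fact that $i$ is an imaginary unit (since $i^2=-1$), so that all three partial alternativity conditions apply with $x=i$: namely $(i,i,y)=0$, $(i,y,i)=0$ and $(y,i,i)=0$ for every $y\in\mathcal A$. First I would locate the products $iw$ and $wi$ inside the algebra. Because $i\in\mathcal B$ and $w\in\mathcal C$, the eigenspace relations $\mathcal B\mathcal C=\mathcal C\mathcal B=\mathcal C$ force both $iw$ and $wi$ to lie in $\mathcal C=\operatorname{Span}_{\mathbb R}\{w,v\}$. By the choice of basis $wi=v$, so the only unknown is $iw$, which I would write as $iw=\alpha w+\beta v$ with $\alpha,\beta\in\mathbb R$. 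The goal is then to show $\alpha=0$ and $\beta=\pm1$.

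Second, I would reduce the auxiliary products $vi$ and $iv$ to the basis $\{w,v\}$ using the remaining two partial alternativity conditions. Right alternativity $(w,i,i)=0$ gives $vi=(wi)i=w(i^2)=-w$. Flexibility $(i,w,i)=0$ gives $iv=i(wi)=(iw)i$, and expanding, $(iw)i=(\alpha w+\beta v)i=\alpha(wi)+\beta(vi)=\alpha v-\beta w$, where the last equality uses the value of $vi$ just computed. Thus both $vi$ and $iv$ are now expressed in $\{w,v\}$.

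Finally, I would feed these into left alternativity. The identity $(i,i,w)=0$ reads $i(iw)=i^2w=-w$. Expanding the left side, $i(iw)=\alpha(iw)+\beta(iv)=\alpha(\alpha w+\beta v)+\beta(\alpha v-\beta w)=(\alpha^2-\beta^2)w+2\alpha\beta v$. Comparing coefficients against $-w$ yields the system $\alpha^2-\beta^2=-1$ and $2\alpha\beta=0$. The second equation forces $\alpha=0$ or $\beta=0$; the choice $\beta=0$ would give $\alpha^2=-1$, impossible over $\mathbb R$, so $\alpha=0$ and hence $\beta^2=1$, i.e.\ $\beta=\pm1$. Since $iw=\beta v=\beta(wi)$, this is exactly $iw=wi$ or $iw=-wi$. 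The computation is essentially mechanical once the framework is set; the nearest thing to an obstacle is the coordinated use of \emph{all three} partial alternativity identities to eliminate $vi$ and $iv$ before the final coefficient comparison, since dropping either flexibility or right alternativity would leave these products undetermined and the system unsolvable.
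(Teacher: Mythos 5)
Your proof is correct and takes essentially the same approach as the paper: both write $iw=\alpha w+\beta(wi)$, use right alternativity and flexibility at $i$ to reduce $(wi)i$ and $i(wi)$, then apply left alternativity $i(iw)=i^2w=-w$ to obtain the system $\alpha^2-\beta^2=-1$, $\alpha\beta=0$, forcing $\alpha=0$ and $\beta=\pm1$. The only cosmetic difference is that the paper bundles the three identities into the statement that $\mathcal A$ is a $\mathcal B$-bimodule, whereas you invoke them individually.
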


\begin{proof}
By \eqref{eq1},  $\mathcal B \mathcal C = \mathcal C$, and hence we have that $iw=\alpha w +\beta wi$ where $\alpha, \beta\in \mathbb R$.  Since $\mathcal A$ is partially alternative,  $\mathcal A$ is   a $\mathcal B$-bimodule. Therefore,
\begin{align*}
i(wi)=(iw)i &= (\alpha w +\beta wi)i = \alpha wi + \beta (wi) i =\alpha wi + \beta wi^2  =\alpha wi -\beta w.
\end{align*}
Hence, $i(wi) = -\beta w +\alpha wi.$ However, 
\begin{align*}
-w &= i^2 w = i(iw) = i(\alpha w +\beta wi) =\alpha iw + \beta i(wi) = \alpha iw + \beta (-\beta w +\alpha wi)\\
&=\alpha (\alpha w +\beta wi) -\beta^2 w +\alpha\beta wi = \alpha^2 w +\alpha\beta wi -\beta^2 w +\alpha\beta wi\\
&= (\alpha^2 -\beta^2)w + 2\alpha\beta wi.
\end{align*}
We have that $\alpha^2 -\beta^2 = -1$ and $\alpha\beta =0$. Hence, $\alpha =0$ and $\beta =\pm 1$ which implies that $iw=\pm wi$, as needed.
\end{proof}

\begin{proposition} Let $\mathcal A$ be a four-dimensional  real division algebra with unity $1$. Assume that $\mathcal A$ admits  a reflection.  Then ${\mathcal NC}(\mathcal A)= \mathbb R 1$.
\end{proposition}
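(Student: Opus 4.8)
The plan is to use the reflection to break the commutative nucleus into two pieces and then show that each piece collapses to scalars.

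First I would observe that $\mathcal{NC}(\mathcal A)$ is invariant under every automorphism: if $zy=yz$ for all $y$, then applying $\varphi$ gives $\varphi(z)\varphi(y)=\varphi(y)\varphi(z)$, and since $\varphi$ is bijective $\varphi(z)$ commutes with everything, so $\varphi(z)\in\mathcal{NC}(\mathcal A)$. Because $\varphi$ is a reflection, Lemma \ref{lemma3} gives $\mathcal A=\mathcal B\oplus\mathcal C$ with $\varphi$ the identity on $\mathcal B$ and $-Id_{\mathcal A}$ on $\mathcal C$; invariance then yields the splitting $\mathcal{NC}(\mathcal A)=\big(\mathcal{NC}(\mathcal A)\cap\mathcal B\big)\oplus\big(\mathcal{NC}(\mathcal A)\cap\mathcal C\big)$. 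Since $1\in\mathcal B$ and $\mathcal B=\operatorname{Span}\{1,i\}\cong\mathbb C$, it suffices to prove (i) $\mathcal{NC}(\mathcal A)\cap\mathcal C=0$ and (ii) $i\notin\mathcal{NC}(\mathcal A)$.

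For (ii) I would work in the basis $\{1,i,w,v\}$ with $v=wi$ and the relations \eqref{eq1}. An element $a1+bi$ of $\mathcal B$ commutes with $w$ exactly when $b(iw-wi)=0$, and as $wi=v\neq0$ this forces $b=0$ as soon as $iw\neq wi$; thus (ii) is equivalent to the statement that $i$ is not central. If partial alternativity is in force, Lemma \ref{lemma5} already narrows the possibilities to $iw=\pm wi$, and in the case $iw=-wi$ a short direct computation (using $iv=w$ and $vi=-w$, which follow from flexibility and right alternativity) shows that any $z\in\mathcal{NC}(\mathcal A)$ has vanishing $w$-, $v$-, and $i$-components, giving $\mathcal{NC}(\mathcal A)=\mathbb R1$ at once.

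The heart of the argument, and what I expect to be the main obstacle, is excluding the remaining configuration $iw=wi$, i.e. the existence of a central imaginary unit. Here the naive device of factoring $z^2-\alpha1=(z-\sqrt{\alpha}\,1)(z+\sqrt{\alpha}\,1)$ to manufacture a zero divisor fails precisely because the relevant square is negative ($i^2=-1$), and a central imaginary unit does not by itself make $\mathcal A$ an algebra over $\mathbb C$: the associators $(i,x,y)$ and $(x,i,y)$ are \emph{not} controlled by the bimodule/partial-alternativity identities, which only govern associators with two factors in $\mathcal B$. I would therefore close this case by the method of Lemmas \ref{lemma1} and \ref{lemma2}: determine the set $\mathcal I_{\mathcal A}$ of imaginary units explicitly, impose the partial-alternativity associator conditions to pin down the remaining structure constants $w^2,wv,vw,v^2\in\mathcal B$, and then exhibit an explicit zero divisor, contradicting that $\mathcal A$ is a division algebra. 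Equivalently, one shows that the resulting complex structure $J=L_i$ would force some nonzero $a\notin\mathbb C1$ to have singular left multiplication over $\mathbb C$.

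Part (i) is handled in the same spirit. A nonzero central $c\in\mathcal C$ satisfies $L_c=R_c$ and anticommutes with the reflection, since $\varphi L_c\varphi^{-1}=L_{\varphi(c)}=-L_c$; hence the eigenvalues of $L_c$ occur in pairs $\pm\lambda$, and none is real, for a real eigenvalue would give $(c-\lambda1)x=0$ and thus $c=\lambda1\in\mathcal B$, contradicting $0\neq c\in\mathcal C$. With $c^2\in\mathcal C\mathcal C=\mathcal B$ and centrality in hand, the same dichotomy applies: either the elementary factorization produces a zero divisor, or one is reduced to a central imaginary unit and must again invoke the explicit structure-constant analysis. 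As before, the delicate point is that non-associativity obstructs the elementary factorization, so the division hypothesis together with the partial-alternativity relations must be combined to finish.
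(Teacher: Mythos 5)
There is a genuine gap, and it sits exactly where you locate the ``heart of the argument.'' The configuration you declare to be the main obstacle --- a central imaginary unit, i.e.\ the case $iw=wi$ --- is not an obstacle at all; it only looks like one because you restrict the factorization trick to \emph{real} square roots $\sqrt{\alpha}\,1$. The paper's proof takes the square root inside $\mathcal B=\mathcal E_1(\varphi)\cong\mathbb C$, which is closed under square roots: for \emph{any} $z\in\mathcal B$ there is $z_0\in\mathcal B$ with $z_0^2=z$, including $z=-1$. Moreover the expansion $(x-z_0)(x+z_0)=x^2+xz_0-z_0x-z_0^2$ uses only bilinearity of the product, so no associativity --- and hence no partial alternativity --- is needed; the cross terms cancel as soon as $x$ and $z_0$ commute, which holds whenever \emph{either} of them lies in ${\mathcal NC}(\mathcal A)$. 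This closes both of your cases in two lines. For (i): if $0\neq c\in{\mathcal NC}(\mathcal A)\cap\mathcal C$, note $c^2\in\mathcal B$ (since $\varphi(c^2)=(-c)^2=c^2$), pick $z_0\in\mathcal B$ with $z_0^2=c^2$; then $(c-z_0)(c+z_0)=c^2-z_0^2=0$ because $c$ is central, while $c\pm z_0\neq 0$ because $c$ and $z_0$ lie in complementary eigenspaces --- contradicting that $\mathcal A$ is a division algebra. In particular your worrisome central imaginary unit $c\in\mathcal C$ with $c^2=-1$ is killed instantly by $(c-i)(c+i)=c^2-i^2=0$. For (ii): if $i$ were central, then $\mathcal B\subseteq{\mathcal NC}(\mathcal A)$; pick any nonzero $w\in\mathcal C$ and $z_0\in\mathcal B$ with $z_0^2=w^2$; now $z_0$ is central, so $(w-z_0)(w+z_0)=0$ with both factors nonzero, the same contradiction. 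This is precisely how the paper runs its Cases 1 and 2.

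Independently of this, your fallback plan is inadmissible for the statement as given: the proposition assumes only that $\mathcal A$ is a four-dimensional unital real division algebra admitting a reflection --- partial alternativity is \emph{not} a hypothesis --- so neither Lemma \ref{lemma5} nor any ``partial-alternativity associator conditions'' may be invoked (the same objection applies to your use of flexibility and right alternativity to get $iv=w$, $vi=-w$ in the case $iw=-wi$). Even granting that extra hypothesis, Lemmas \ref{lemma1} and \ref{lemma2} concern noncommutative middle $\mathbb C$-associative algebras with multiplication table \eqref{Tn}, which is not available in the present setting, and the structure-constant analysis you outline (``pin down $w^2, wv, vw, v^2$ and exhibit a zero divisor'') is never actually carried out; the spectral remarks about $L_c$ in case (i) likewise terminate without a conclusion. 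So as written the proposal proves at most a weaker statement and leaves its central case as an unexecuted plan. Your opening moves --- $\text{Aut}(\mathcal A)$-invariance of ${\mathcal NC}(\mathcal A)$ and the eigenspace splitting ${\mathcal NC}(\mathcal A)=\bigl({\mathcal NC}(\mathcal A)\cap\mathcal B\bigr)\oplus\bigl({\mathcal NC}(\mathcal A)\cap\mathcal C\bigr)$ --- do match the paper's proof; the divergence, and the gap, begin immediately after.
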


\begin{proof}
Since $1\in  {\mathcal NC}(\mathcal A), $  $\dim\, {\mathcal NC}(\mathcal A) \geq 1$. Let us first show that  $ {\mathcal NC}(\mathcal A)$ is $\text{Aut}(\mathcal A)$-invariant.  Indeed, let $f\in \text{Aut}(\mathcal A)$, and choose any $x \in   {\mathcal NC}(\mathcal A)$, 
$y\in \mathcal A$. Since $f$ is bijective, there is $y_0\in \mathcal A$ such that $y=f(y_0)$. Hence, 
\[f(x)y=f(x)f(y_0)=f(xy_0)=f(y_0x)=f(y_0)f(x)=yf(x).\] 
Therefore, $f(x)y =yf(x)$ for any $y\in \mathcal A$. Hence, $f(x)\in { \mathcal NC}(\mathcal A)$ for  any $x\in {\mathcal NC}(\mathcal A)$, as required.

Next, by Lemma \ref{lemma3},  $\mathcal A= \mathcal E_1(\varphi) \oplus \mathcal E_{-1}(\varphi)$.  Moreover, $\mathcal E_1(\varphi)$ is a unital subalgebra, and 
$$\mathcal E_1(\varphi) \mathcal E_{-1}(\varphi)= \mathcal E_{-1}(\varphi)\mathcal E_1(\varphi) = \mathcal E_{-1}(\varphi),\qquad \mathcal E_{-1}(\varphi) \mathcal E_{-1}(\varphi) = \mathcal E_1(\varphi).$$   
By Lemma \ref{lemma4} we have that $\dim \mathcal E_1(\varphi) = \dim \mathcal E_{-1}(\varphi) = 2$, and $\mathcal E_1(\varphi) \cong \mathbb C$. 

As  ${\mathcal NC}(\mathcal A)$ is $\text{Aut}(\mathcal A)$-invariant, we have that $\varphi({\mathcal NC}(\mathcal A) )= {\mathcal NC}(\mathcal A)$. Thus, the restriction $\psi = \varphi|_{ {\mathcal NC}(\mathcal A)}$ is a well-defined
linear mapping of ${\mathcal NC}(\mathcal A)$.
Since $\varphi^2 = Id$,  $\psi^2=\varphi^2|_{ {\mathcal NC}(\mathcal A)} = Id$, and, hence, $\psi$ is  an involution. This implies that its Jordan form contains only $1\times 1$ blocks with either 1 or -1 on the main diagonal.   Therefore, 
$\psi$ is diagonalizable on ${\mathcal NC}(\mathcal A)$.

We next  show that $\dim {\mathcal NC}(\mathcal A) = 1$ by eliminating the other possibilities for its dimension.

\noindent {\bf Case 1.} Let us  assume  that $\dim {\mathcal NC}(\mathcal A)$ is 3. Hence, $  {\mathcal NC}(\mathcal A) = \text{Span}_{\mathbb R}\{1, e_1, e_2\}$ where $e_i$ is an eigenvector of $\varphi$ corresponding to $\lambda_i \in\{1, -1\}$.

If both $e_1$ and $e_2$ are in $\mathcal E_1(\varphi)$, then $\dim \mathcal E_1(\varphi) \geq 3$ as $1\in \mathcal E_1(\varphi)$, a contradiction. Therefore, one of $e_1$, $e_2$ must be in $\mathcal E_{-1}(\varphi)$. Without loss of generality, let us assume that 
$e_2\in \mathcal E_{-1}(\varphi)$. Then $e^2_2=z\in \mathcal E_1(\varphi)$. There exists $z_0\in \mathcal E_1(\varphi)\cong \mathbb C$ such that $z^2_0=z$. Since $e_2\in {\mathcal NC}(\mathcal A)$, $e_2$ and $z_0$ commute, we have that
$$(e_2 - z_0)(e_2 + z_0) =e^2_2 - z_0 e_2 +e_2 z_0 - z^2_0 = e^2_2 - z^2_0= z-z=0.$$

However, since $e_2 - z_0 \ne 0,$ $e_2 + z_0 \ne 0$, and $\mathcal A$ is a division algebra, this is not possible.

\noindent {\bf Case 2.} Assume now that $\dim {\mathcal NC}(\mathcal A)$ is 2. Then ${\mathcal NC}(\mathcal A) =\text{Span}_{\mathbb R}\{1, e\}$ where $e$ is an eigenvector of $\psi$. Thus, $e\in \mathcal E_1(\varphi)$ or $e\in \mathcal E_{-1}(\varphi)$. 
If $e\in \mathcal E_1(\varphi)$, then ${\mathcal NC}(\mathcal A) = \mathcal E_1(\varphi )$ by dimension argument. Choose any $w\in \mathcal E_{-1}(\varphi)$. Then $w^2=z\in \mathcal E_1(\varphi)$. There exists $z_0\in \mathcal E_1(\varphi)={\mathcal NC}(\mathcal A)$ 
such that $z^2_0 = z$. Also, $w$ and $z_0$ commute. Then \[(w-z_0)(w+z_0) = w^2 -z_0w+wz_0 -z^2_0 = 0.\]
Like in the previous case,  this is a contradiction.

Once again, if $e\in \mathcal E_{-1}(\varphi),$  then $e^2 = z\in \mathcal E_1(\varphi)$. We can find $z_0 \in \mathcal E_1(\varphi)$ such that $z^2_0 = z$. Since $e\in {\mathcal NC}(\mathcal A)$, we have that 
\[(e-z_0)(e+z_0) =e^2 +ez_0-z_0e -z^2_0=0,\] 
which is  impossible.

This shows that $\dim\,{\mathcal NC}(\mathcal A)=1$, and ${\mathcal NC}(\mathcal A) =\mathbb R 1$.
\end{proof} 
 
\begin{corollary}  Let $\mathcal A$ be a four-dimensional partially alternative real division algebra with unit element $1$. Further assume that $\mathcal A$ admits a reflection $\varphi$ and write  $\mathcal A = \mathcal B \oplus \mathcal C$ where $\mathcal B=  \text{Span}_{\mathbb R}\{1, i\} \cong \mathbb C$.  Then    $i y = - y i$ for all $y\in \mathcal C$. 
\end{corollary}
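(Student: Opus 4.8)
The plan is to use Lemma \ref{lemma5} to split into the two cases $iw = wi$ and $iw = -wi$, and then to eliminate the first case by invoking the preceding Proposition, which established that ${\mathcal NC}(\mathcal A) = \mathbb R 1$. The starting point is to record the only two computational tools needed. Since $\mathcal B = \text{Span}_{\mathbb R}\{1, i\} \cong \mathbb C$ and $\mathcal A$ is partially alternative, $\mathcal A$ is a $\mathcal B$-bimodule, so the associator vanishes whenever two of its arguments lie in $\mathcal B$. In particular $(i, w, i) = 0$ gives $i(wi) = (iw)i$, and $(w, i, i) = 0$ gives $(wi)i = w(i^2) = -w$. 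Because $v = wi$, these identities express $iv = i(wi) = (iw)i$ and $vi = (wi)i$ entirely in terms of $w$ once the value of $iw$ is fixed.

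Next I would dispose of the case $iw = wi$. Under this hypothesis the two identities above yield $iv = (iw)i = (wi)i = -w$ and $vi = (wi)i = -w$, so $i$ commutes with $v$ as well. Since $i$ trivially commutes with $1$ and with itself, and commutes with $w$ by assumption, it commutes with every element of the basis $\{1, i, w, v\}$, hence with all of $\mathcal A$ by bilinearity. Thus $i \in {\mathcal NC}(\mathcal A)$. This contradicts the Proposition, which asserts ${\mathcal NC}(\mathcal A) = \mathbb R 1$, because $i \notin \mathbb R 1$. Consequently Lemma \ref{lemma5} forces $iw = -wi$.

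Finally, with $iw = -wi$ in hand, I would compute $iv = (iw)i = (-wi)i = -(wi)i = w$ and $vi = (wi)i = -w$, so that $iv = -vi$. Combining this with $iw = -wi$ and using that $\{w, v\}$ is a basis of $\mathcal C$, bilinearity of multiplication gives $iy = -yi$ for every $y \in \mathcal C$, which is the desired conclusion.

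I expect the only genuine step to be the elimination of the case $iw = wi$; the remaining computations are short and direct. The crux is recognizing that commutativity of $i$ with the single generator $w$ propagates, through the $\mathcal B$-bimodule identities, to commutativity with all of $\mathcal C$, thereby placing $i$ in the commutative nucleus and triggering the contradiction with ${\mathcal NC}(\mathcal A) = \mathbb R 1$ supplied by the previous Proposition.
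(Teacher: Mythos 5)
Your proof is correct and takes essentially the same approach as the paper: both rest on Lemma \ref{lemma5} to reduce to the dichotomy $iw=\pm wi$, and both eliminate the case $iw=wi$ by computing $iv=vi$ via the $\mathcal B$-bimodule identities, concluding that $i\in{\mathcal NC}(\mathcal A)$ in contradiction with the Proposition ${\mathcal NC}(\mathcal A)=\mathbb R 1$. The only cosmetic difference is that the paper runs a proof by contradiction on an arbitrary $w\in\mathcal C$ with $iw\neq -wi$, whereas you fix one basis element $w$, settle the dichotomy, and then extend the anticommutation to all of $\mathcal C$ by bilinearity; both are valid.
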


\begin{proof}
Assume that there is an element $w\in \mathcal C$ such that $iw \neq -wi$.  Then $\{1, i, w, v \}$ where $v=wi$ is a basis of $\mathcal A$ satisfying conditions of  Lemma \ref{lemma5}. Hence, by assumption and using Lemma \ref{lemma5},  we have that $iw= wi$.  Then $iv= i(wi)= (iw)i=(wi)i=vi$. This 
means that $i$ commutes with any element from $\mathcal C$ as $\{w, v\}$ is a basis for it. Since $i$ obviously commutes 
with any element from $\mathcal B$,  we have that $i\in {\mathcal NC}(\mathcal A)$ and $\dim\,{\mathcal NC}(\mathcal A) \ge 2$, which is impossible by Proposition 2.
\end{proof} 

\begin{corollary}\label{cor2} Let $\mathcal A$ be a four-dimensional partially alternative real division algebra with unit $1$.  Assume that $\mathcal A$ admits a reflection $\varphi$.  Then there exists a basis $\{1, i, w, v\}$ of $\mathcal A$ with 
the following multiplication table $(T_p)$:

\begin{equation}   \label{Tp}
\begin{tabular}{c|cccc} 
          & $\bf 1$            & $\bf i$                                  & $\bf w$                                                 & $\bf v$      \\ 
\hline
$\bf 1$     &  $1$           & $i$                                  &  $w$                                                &   $v$     \\
$\bf i$      &  $i$            & $-1$                                &  $-v$                                                &   $w $ \\
$\bf w$     &  $w$          & $v$                                 &  $\alpha_1 {1} +\alpha_2 { i}$      & $\beta_1 {1} +\beta_2 {i}$   \\ 
$\bf v$     &  $v$           &  $-w$                              &  $\delta_1 {1} +\delta_2 {i }$       & $\gamma_1 { 1} + \gamma_2 {i}$  \\ 
\end{tabular} \tag{$T_p$}
\end{equation}  
where $\alpha_1, \alpha_2, \beta_1, \beta_2, \delta_1, \delta_2, \gamma_1, \gamma_2 \in\mathbb R$.

\end{corollary}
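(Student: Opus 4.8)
The plan is to read off each entry of the table $(T_p)$ directly from the structure already assembled, since once the products involving $i$ are pinned down the form of the table is essentially bookkeeping. Throughout I would use that $\{1,i,w,v\}$ with $v=wi$ is a basis, that $\mathcal B=\operatorname{Span}_{\mathbb R}\{1,i\}\cong\mathbb C$ and $\mathcal C=\operatorname{Span}_{\mathbb R}\{w,v\}$, and that $i^2=-1$ makes $i$ an imaginary unit, so all three partial alternativity identities $(i,i,y)=(i,y,i)=(y,i,i)=0$ are available with $i$ in the repeated slot.

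First I would dispose of the entries forced by unitality: the row and column indexed by $1$ are immediate, and $i\cdot i=-1$ by construction. Next I would compute the four mixed products $iw,\ wi,\ iv,\ vi$. By definition $wi=v$. By the previous corollary (that $iy=-yi$ for all $y\in\mathcal C$) we get $iw=-wi=-v$. For $vi$ I would invoke partial right alternativity $(w,i,i)=0$, which gives $vi=(wi)i=w(i^2)=-w$. For $iv$ I would invoke partial flexibility, equivalently the $\mathcal B$-bimodule identity $(i,w,i)=0$, so that $iv=i(wi)=(iw)i=(-v)i=-(vi)=w$. This fills in the upper-left $2\times2$ block together with the entire $i$-row and $i$-column exactly as displayed in $(T_p)$.

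Finally, the remaining four products $w^2,\ wv,\ vw,\ v^2$ all lie in $\mathcal C\mathcal C=\mathcal B$ by \eqref{eq1}, hence each is a real linear combination of $1$ and $i$; naming the coefficients $\alpha_1,\alpha_2,\beta_1,\beta_2,\delta_1,\delta_2,\gamma_1,\gamma_2\in\mathbb R$ reproduces the bottom-right block of $(T_p)$, which completes the table.

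The argument is essentially obstruction-free, with no nonlinearity or case analysis required, precisely because $(T_p)$ leaves the $\mathcal C\mathcal C$ entries as free parameters rather than demanding particular values. The only point that genuinely needs care is selecting the correct partial-alternativity identity for each mixed product, namely flexibility to move $i$ across $w$ when computing $iv$ and right alternativity to collapse $(wi)i$ when computing $vi$, and tracking the signs, since a misstep there would corrupt the whole upper block.
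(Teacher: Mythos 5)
Your proof is correct and follows exactly the route the paper intends (the paper states this corollary without proof, as it is a direct assembly of the preceding results): the basis $\{1,i,w,v\}$ with $v=wi$ from the setup after Lemma \ref{lemma4}, the anticommutation $iy=-yi$ from the preceding corollary, the partial right alternativity and flexibility identities with the imaginary unit $i$ to get $vi=-w$ and $iv=w$, and $\mathcal C\mathcal C=\mathcal B$ from \eqref{eq1} for the free parameters in the lower block. All signs check out against $(T_p)$.
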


\section{ Lie algebras associated with partially alternative algebras}
This section opens with a lemma illustrating the close connection between partially alternative division algebras and Lie algebras.
\begin{lemma}
Let $\mathcal A$ be a four-dimensional unital partially alternative division algebra with a  reflection.  Then $\mathcal A$ becomes  a Lie algebra $\mathcal L$ with respect to the product $[x, y] =xy-yx$ where $x, y\in \mathcal A$.

\end{lemma}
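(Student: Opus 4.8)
The plan is to check the three defining properties of a Lie bracket for $[x,y] = xy - yx$: bilinearity, antisymmetry, and the Jacobi identity. Bilinearity is immediate from distributivity of the algebra product, and antisymmetry $[x,x]=0$ is obvious. The entire content therefore lies in establishing the Jacobi identity
$$J(x,y,z) := [[x,y],z] + [[y,z],x] + [[z,x],y] = 0.$$

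The first key observation is that the Jacobiator $J$ is a totally antisymmetric trilinear map in its three arguments, a fact that follows purely from the antisymmetry of the bracket and does \emph{not} presuppose Jacobi. Indeed, $J$ is manifestly invariant under cyclic permutations, and a one-line check shows that transposing two arguments reverses its sign; since a $3$-cycle and a transposition generate $S_3$, the map $J$ transforms by the sign character and hence is alternating. Consequently $J$ is determined, up to sign, by its values on unordered triples of \emph{distinct} basis vectors. Working in the basis $\{1, i, w, v\}$ supplied by Corollary \ref{cor2} with multiplication table \eqref{Tp}, it therefore suffices to verify $J = 0$ on the $\binom{4}{3} = 4$ basis triples.

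Next I would dispose of every triple containing the unit $1$. Since $[1, x] = 1\cdot x - x\cdot 1 = 0$ for all $x$, any bracket involving $1$ vanishes, whence $J(1,y,z)=0$ trivially. This eliminates three of the four triples and reduces the whole problem to the single triple $\{i,w,v\}$, i.e.\ to showing $[[i,w],v] + [[w,v],i] + [[v,i],w] = 0$.

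Finally I would compute these three terms directly from \eqref{Tp}. From $iw=-v$, $wi=v$, $iv=w$, $vi=-w$ one gets $[i,w]=-2v$ and $[v,i]=-2w$, both lying in $\mathcal C$, so $[[i,w],v] = -2[v,v]=0$ and $[[v,i],w] = -2[w,w]=0$. For the middle term, the products $wv$ and $vw$ both lie in $\mathcal B = \operatorname{Span}_{\mathbb R}\{1,i\}\cong\mathbb C$ by \eqref{Tp}, so $[w,v]\in\mathcal B$; as $\mathcal B$ is commutative it commutes with $i\in\mathcal B$, giving $[[w,v],i]=0$. Hence $J(i,w,v)=0$, and the Jacobi identity holds on all of $\mathcal A$. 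The main conceptual step—rather than any genuine obstacle—is the reduction, via the antisymmetry of $J$, to this one nontrivial triple; once that is in place, the structural facts that $\mathcal B$ is commutative and that $\mathcal C\,\mathcal C\subseteq\mathcal B$ make the computation collapse at once.
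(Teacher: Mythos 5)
Your proof is correct and takes essentially the same approach as the paper: both reduce the Jacobi identity, via trilinearity and antisymmetry, to the basis $\{1,i,w,v\}$ of Corollary \ref{cor2}, dismiss every triple containing $1$ or a repeated entry, and then verify the single remaining triple $\{i,w,v\}$ using $[i,w]=\mp 2v$, $[v,i]=-2w$, and the fact that $[w,v]$ lies in $\mathcal B=\operatorname{Span}_{\mathbb R}\{1,i\}$ and hence commutes with $i$. The only cosmetic difference is that you justify the reduction by the alternating character of the Jacobiator, whereas the paper checks the degenerate cases directly.
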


\begin{proof} By Corollary \ref{cor2}, we can choose a basis  $\{1, i, w, v \}$ of $\mathcal A$ with multiplication table \eqref{Tp}.
Since anti-commutativity of the product $[x, y]$ is known, we only verify the \emph{Jacobi identity}:
 $$J(x, y, z)=[x, [y, z]] + [y, [z, x]] + [z, [x, y]] = 0.$$
It sufficies to show that Jacobi identity holds  for any choice of $x, y, z \in \{1, i, v, w\}$.

First,  assume that at least one of $x, y, z$, say $x$, equals to 1.  Then
$J(1, y, z) = 0$ since $[1, a] =0$ for any $a\in \mathcal A$. 

 We now assume that $x, y, z\in \{i, v, w\}$ and  are distinct; otherwise, if, for example, $x=y$, then, clearly, $J(x, x, z)=0.$ 
By setting $x=i$, $y=v$ and $z=w$, we obtain
\begin{align*}
& [ i, [v, w]] + [ v, [w, i] ] + [w, [i, v ] ] = 0 + [ v, -2iw] + [w, -2vi] \\
&= [v, 2v] + [w, 2w] = 2[v, v] +2 [w, w] =0
\end{align*}
\noindent as $[v, w]$ is in $\text{Span}_{\mathbb R}\{1, i\} $, and  $[w, i] = -2iw=2v$, $[v, i] = 2vi =-2w$.
This proves the claim.
\end{proof}

Mubarakzyanov's classification of low-dimensional real Lie algebras, published in 1963 \cite{Mub}, provides a comprehensive framework for understanding the structure of these algebras up to dimension five. This work complements earlier classifications and has been influential in the study of solvable and indecomposable Lie algebras. 

 Building on this foundational work, we will explore how four-dimensional partially alternative real division algebras yield various types of Lie algebras. 

Let us now recall the canonical multiplication tables of relevant Lie algebras, such as $\mathfrak g_{3, 5}, \mathfrak g_{3,7}$, and $\mathfrak g_{4,9}$, which play an important role in our classification:
\begin{enumerate}

\item  $\mathfrak g_{3, 5}= \text{Span}_{\mathbb R}\{e_1, e_2, e_3\}$ where $ [e_1, e_3] = \beta' e_1 - e_2$, $[e_2, e_3] = e_1 + \beta' e_2$ where $\beta' \geq 0$.  This is a solvable Lie algebra.

\item $ \mathfrak g_{3, 7} =  \text{Span}_{\mathbb R}\{e_1, e_2, e_3\}$ where $[e_2, e_3]=e_1,$ $[e_3, e_1]=e_2$ and $[e_1, e_2]=e_3$. This is a simple Lie algebra isomorphic to $\mathfrak{so}(3)$.

\item $\mathfrak g_{4,9}= \text{Span}_{\mathbb R}\{e_1, e_2, e_3, e_4\}$ where $[e_2, e_3] = e_1,$ $[e_1, e_4] = 2\alpha' e_1$, $[e_2, e_4]=\alpha' e_2 - e_3$, $[e_3, e_4] = e_2 +\alpha' e_3.$ This is an indecomposable solvable Lie algebra.

\end{enumerate}

In what follows  $\mathfrak{g}_1$ denotes a 1-dimensional  real Lie algebra.

\begin{proposition}  Let $\mathcal A$ be a four-dimensional partially alternative unital real division algebra with a reflection.
Let $\mathcal L = (\mathcal A,\, [\,\,,\,\,])$ be the Lie algebra associated with $\mathcal A$. Let $[v, w]= \alpha 1+\beta i$ where
$\alpha, \beta \in  \mathbb R$.  Write $\mathcal L = \mathbb R 1 + \mathcal I$ where $\mathcal I = \text{Span}_{\mathbb R}\{i, w, v\}$.
\begin{enumerate}
\item If $\alpha=0$ and $\beta \neq 0$, then $\mathcal I$ is a simple Lie ideal isomorphic to $\mathfrak{so}(3)$. Hence,   $\mathcal L \cong \mathfrak{g}_1 \oplus \mathfrak g_{3,7}.$ 
\item If $\alpha=\beta=0$, then $\mathcal L\cong \mathfrak g_1 \oplus \mathfrak g_{3, 5}.$ 
\item If $\alpha \neq 0$ and $\beta \neq 0$, then  $\mathcal L  \cong \mathfrak{g}_1 \oplus \mathfrak g_{3,7}.$ 
\item  If $\alpha\neq 0$ and $\beta = 0$, then $\mathcal L \cong \mathfrak g_{4,9}$ (with zero parameter).
\end{enumerate}

\end{proposition}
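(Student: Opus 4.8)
The plan is to read the Lie bracket straight off the multiplication table \eqref{Tp} and to reduce the whole statement to the single pair $(\alpha,\beta)$. Since $1$ is the unit, $[1,x]=0$ for all $x$, so $\mathbb R1$ lies in the centre of $\mathcal L$. From \eqref{Tp} one gets $[i,w]=iw-wi=-2v$ and $[i,v]=iv-vi=2w$, while $[v,w]=vw-wv=(\delta_1-\beta_1)1+(\delta_2-\beta_2)i=\alpha 1+\beta i$ by the very definition of $\alpha,\beta$; all remaining brackets of basis vectors vanish. Thus $\mathcal L$ is completely determined by $(\alpha,\beta)$, and the four cases correspond exactly to the vanishing pattern of these two numbers.

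For the two solvable cases I would simply exhibit an explicit basis. When $\alpha=\beta=0$ (case (2)) the subspace $\mathcal I=\operatorname{Span}\{i,w,v\}$ is an ideal and $\mathcal L=\mathbb R1\oplus\mathcal I$; setting $e_1=w$, $e_2=v$, $e_3=-\tfrac12 i$ gives $[e_1,e_3]=-e_2$, $[e_2,e_3]=e_1$, $[e_1,e_2]=0$, which is precisely $\mathfrak g_{3,5}$ with $\beta'=0$, whence $\mathcal L\cong\mathfrak g_1\oplus\mathfrak g_{3,5}$. When $\alpha\neq0$, $\beta=0$ (case (4)) the element $[v,w]=\alpha1$ is central, and the change of basis $e_1=-\alpha1$, $e_2=w$, $e_3=v$, $e_4=-\tfrac12 i$ yields $[e_2,e_3]=e_1$, $[e_2,e_4]=-e_3$, $[e_3,e_4]=e_2$, $[e_1,e_4]=0$, i.e. the table of $\mathfrak g_{4,9}$ with $\alpha'=0$. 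In both cases the only point that matters is that $\operatorname{ad}_i$ acts on $\operatorname{Span}\{w,v\}$ as a rotation (eigenvalues $\pm2i$), so no positivity input is needed here.

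The substantive cases are (1) and (3), i.e. $\beta\neq0$. Here I would set $e_1'=\alpha1+\beta i\in\mathcal B$ and check that $\mathfrak s=\operatorname{Span}\{e_1',w,v\}$ is a $3$-dimensional ideal (indeed $[e_1',w]=-2\beta v$, $[e_1',v]=2\beta w$, $[v,w]=e_1'$, and $\operatorname{ad}_i$ preserves $\mathfrak s$), while $\mathbb R1$ is a central complement; note that in case (1) one has $\mathfrak s=\operatorname{Span}\{i,w,v\}=\mathcal I$. Thus $\mathcal L=\mathbb R1\oplus\mathfrak s$ with $\mathbb R1\cong\mathfrak g_1$. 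To identify $\mathfrak s$ I would compute its Killing form in the basis $(e_1',w,v)$, which comes out diagonal, equal to $\operatorname{diag}(-8\beta^2,-4\beta,-4\beta)$. Hence $\mathfrak s$ is semisimple, and it is the compact form $\mathfrak{so}(3)=\mathfrak g_{3,7}$ precisely when this form is negative definite, i.e. when $\beta>0$; if $\beta<0$ the signature is $(2,1)$ and $\mathfrak s\cong\mathfrak{sl}(2,\mathbb R)$.

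So the entire proposition reduces to one inequality, $\beta>0$, and this is the step I expect to be the main obstacle: it is exactly here that the division hypothesis must be used. The reason a genuine argument is unavoidable is that $\beta$ is the antisymmetric (commutator) part of the product on $\mathcal C$, which the symmetric data $q\mapsto q^2$ cannot detect. My plan is to first use partial alternativity — applied not only to $i$ but to the imaginary units obtained by combining $i$ with suitably scaled elements of $\mathcal C$ — together with the $\mathcal B$-bimodule identities to normalize $w^2,v^2,wv,vw$, aiming to bring \eqref{Tp} to a ``quaternion-like'' shape in which $w^2,v^2$ are forced to be \emph{negative} real multiples of $1$ (for the quaternions themselves one checks directly that $[v,w]=2i$, so $\beta=2>0$). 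The value $\beta<0$ would then be excluded by the absence of zero divisors: the plan is to show that $\mathfrak s\cong\mathfrak{sl}(2,\mathbb R)$ produces nonzero $x,y\in\mathcal A$ with $xy=0$, contradicting the bijectivity of $L_x$ and $R_x$. An alternative route, should the normalization prove awkward, is to use that the quartic form $x\mapsto\det L_x$ is sign-definite on $\mathcal A\setminus\{0\}$ and to read off the sign of $\beta$ from its restriction to $\mathfrak s$. Either way, everything outside this positivity statement is the elementary bookkeeping carried out above.
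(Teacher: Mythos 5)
Your reduction is sound as bookkeeping: the only nonvanishing brackets are $[i,w]=-2v$, $[i,v]=2w$, $[v,w]=\alpha 1+\beta i$, and your explicit bases for cases (2) and (4) do verify the tables of $\mathfrak g_{3,5}$ (parameter $0$) and $\mathfrak g_{4,9}$ (parameter $0$). These two cases are complete, and your route is more direct than the paper's, which instead works through Mubarakzyanov's list: in case (2) it excludes $\mathfrak g_{3,2},\mathfrak g_{3,3},\mathfrak g_{3,4}$ because $\operatorname{ad}(i)$ has no real eigenvalue on the derived algebra, and in case (4) it excludes $\mathfrak g_{4,2},\dots,\mathfrak g_{4,6},\mathfrak g_{4,10}$ via the derived series and the rest via the centre, before exhibiting essentially your change of basis. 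Your Killing form for $\mathfrak s=\operatorname{Span}_{\mathbb R}\{\alpha 1+\beta i,\,w,\,v\}$, namely $\operatorname{diag}(-8\beta^2,-4\beta,-4\beta)$, is also correct.

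However, the proposal does not prove cases (1) and (3): you reduce them to the inequality $\beta>0$ and then stop, offering only unexecuted strategies (normalization via further imaginary units, zero divisors from $\mathfrak{sl}(2,\mathbb R)$, the sign of $\det L_x$). This is a genuine gap, not a loose end. If $\beta<0$ could occur, your own computation shows $\mathfrak s\cong\mathfrak{sl}(2,\mathbb R)=\mathfrak g_{3,6}\not\cong\mathfrak g_{3,7}$, so the asserted conclusion really does hinge on positivity; moreover, partial alternativity contributes nothing beyond the table \eqref{Tp} unless one first proves that imaginary units outside $\mathcal B$ exist, so any proof of $\beta>0$ must invoke the division property, exactly as you anticipate, and that argument is missing.

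You should also know how this compares with the paper: the published proof does not establish $\beta>0$ either, but instead claims both signs work. For $\beta<0$ it rescales $h=\frac{1}{2}i$, $e=\frac{1}{\sqrt{-2\beta}}v$, $f=-\frac{1}{\sqrt{-2\beta}}w$ and asserts these satisfy the canonical $\mathfrak{so}(3)$ relations; direct computation gives $[e,f]=h$ but $[f,h]=-e$ and $[h,e]=-f$, the split relations, and by your Killing-form argument no real change of basis can repair this (case (3) is reduced to the same scaling). So the step you flagged as the main obstacle is exactly the step absent from the paper. Either $\beta<0$ is impossible for a four-dimensional partially alternative real division algebra with a reflection --- plausibly so: already the model table $w^2=v^2=-1$, $wv=i=-vw$, which has $\beta=-2$, possesses the zero divisor $(i+w)(1-v)=0$ --- in which case the proposition is true but both your argument and the paper's are incomplete at the same point; or such an algebra exists and the proposition itself is false. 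A correct proof must settle this dichotomy, e.g.\ by carrying out your zero-divisor or determinant strategy in full generality.
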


\begin{proof}
We first assume that $\alpha =0$. Then $[v, w] = \beta i \in \mathcal I$. Hence, $\mathcal I$ is a Lie subalgebra which is also a Lie ideal as $[\mathbb R 1, \mathcal I] =\{0\}$. Thus, 
$\mathcal L = \mathbb R 1\oplus \mathcal I$ where $\mathbb R 1, \mathcal I $ are ideals of $\mathcal L$. In particular,  $\mathcal L$ is a decomposable Lie algebra.

In addition, let us assume that $\beta \neq 0$. Then it can be easily seen that the derived algebra
$\mathcal I^{(1)} = [\mathcal I, \mathcal I] = \text{Span}_{\mathbb R}\{i, v, w\} = \mathcal I$. Hence, $\mathcal I$ is a nonsolvable three-dimensional Lie algebra.

 Recall that  $[v, w]=\beta i,\, [i, v]=2w,\, [i, w]=-2v.$  If $\beta >0$,  then by scaling the basis of $\mathcal I$ according to \[h=\dfrac{1}{2} i, \quad e=\dfrac{1}{\sqrt{2\beta}} v, \quad f = \dfrac{1}{\sqrt{2\beta}} w, \] we obtain 
$[e, f]=h,$ $[f, h]=e$ and $[h, e]=f$ which yields  the canonical basis for $\mathfrak {so}(3)$.    If $\beta < 0$,  then we scale the basis of $\mathcal I$ as follows:  
\[h=\dfrac{1}{2} i, \quad e=\dfrac{1}{\sqrt{-2\beta}} v, \quad f = \dfrac{1}{\sqrt{-2\beta}} (-w),\] which results in the canonical basis for $\mathfrak {so}(3).$  Thus,   $\mathcal L =\mathbb R 1 \oplus \mathfrak {so}(3)  \cong \mathfrak{g}_1 \oplus \mathfrak g_{3,7}$.

If now $\alpha=\beta = 0$, then $[v, w]=0$. Hence, $\mathcal I^{(1)}=\text{Span}_{\mathbb R}\{v, w\},$ $\mathcal I^{(2)}=\{0\}$. 
Referring to Mubarakzyanov's classification of 3-dimensional Lie algebras, $\mathcal I$ is isomorphic to  
$\mathfrak g_{3, 2}$, $\mathfrak g_{3, 3}$, $\mathfrak g_{3, 4}$ or $\mathfrak g_{3, 5}$. Moreover, each of these algebras
  has a canonical basis $\{e_1, e_2, e_3\}$ such that the derived Lie algebra is spanned by $\{e_1, e_2\}$, and 
$\text{ad}(e_3)$ can be restricted to it.  If we write $i=\mu e_3 + v_0$ where $v_0 \in \mathcal I^{(1)}$, $\mu\neq 0$, then \[\text{ad}(i)(x) = \text{ad}(\mu e_3)(x) +\text{ad}(v_0)(x)=
\mu \text{ad}(e_3)(x), \]
as $[v_0, x]=0$ for every $x\in \mathcal I^{(1)}$.

Next, we note that $\mathcal I = \mathbb R 1 + \mathcal I^{(1)}$ and $\text{ad}(i)|_{\mathcal I^{(1)}}$ has no real eigenvalues as its characteristic polynomial is
$p(\lambda) =\lambda^2 + 4$.  However, as follows from the multiplication tables of  $\mathfrak g_{3, 2}$, $\mathfrak g_{3, 3}$, $\mathfrak g_{3, 4}$ with respect to $\{e_1, e_2, e_3\}$, 
$\text{ad}(e_3)|_{\mathcal I^{(1)}}$ must have real eigenvalues which is a contradiction. Thus, $\mathcal I$ must be of the remaining type $\mathfrak g_{3, 5}$. Therefore, 
$\mathcal L\cong \mathfrak g_1 \oplus \mathfrak g_{3, 5}.$

We now assume that $\alpha \neq 0$, and $\beta \ne 0$. Thus, $\mathcal L^{(1)}=\text{Span}_{\mathbb R}\{v, w,  \alpha 1+\beta i\}$. Let $e$ denote $\alpha 1+\beta i$. It is easy to see that $ \mathcal L^{(1)}=
\mathcal L^{(2)}$ which implies that $\mathcal L$ is unsolvable. In addition, $[v, w]=e,$ $[e, v] =2\beta w$, $[e, w] = -2\beta v.$ By scaling the basis accordingly, we obtain a new basis with multiplication similar to that of $\mathcal I$ in the first case (i.e. $\alpha =0$, $\beta \neq 0$). It follows that $\mathcal L = \mathbb R1\oplus  \mathcal L^{(1)}$ where $L^{(1)}\cong \mathfrak{so}(3).$

Let us now consider the remaining case when $\beta =0$. This case seems tedious, as we have to rule out several possibilities. We first note that in this case 
$\mathcal L^{(1)}=\text{Span}_{\mathbb R}\{1, v, w\}$, $\mathcal L^{(2)}= \mathbb R 1$, $\mathcal L^{(3)}=\{0\}$. Then $\mathcal L$ is solvable (non-nilpotent) Lie algebra. 

Moreover, $\mathcal L$ is indecomposable as any nonzero ideal of $\mathcal L$ contains 1. Indeed, let $I'\neq \{0\}$ be an ideal of $\mathcal L$, and let $x=\alpha' 1 + \beta' i + \gamma' v + \delta' w \in \mathcal I'$ such that
$\beta'^2 + \gamma'^2 +\delta'^2 \neq 0$. If $\beta'\neq 0$, then $[[x, v], v] = (-2\alpha\beta')1 \neq 0$ and is in $\mathcal I'$. If  $\gamma'$ (or $\delta'$) is nonzero, then 
$[[x, i], v] = (2\alpha\gamma')1 \neq 0$ and is in $\mathcal I',$ as needed. 

According to Mubarakzayanov's classification, $\mathcal L$ is one of the following types: $\mathfrak g_{4, r}$ where $r=2, \ldots, 10$. Since  $\mathcal L^{(2)}= \mathbb R 1$, 
$\mathcal L$ cannot be of types $\mathfrak g_ {4, 2}, \mathfrak g_{ 4, 3}, \mathfrak g_{4, 4}, \mathfrak g_{4, 5}, \mathfrak g_{4, 6}$ or $\mathfrak g_{4, 10}$ for which 
$\mathcal L^{(2)}=\{0\}$.

For the remaining three types: $\mathfrak g_{4, 7}$, $\mathfrak g_{4, 8}$, $\mathfrak g_{4, 9}$ we verify the existence of an element $x\neq 0$ such that $\text{ad}(x)(\mathcal L)=\{0\}$ as 
$1$ in $\mathcal L$ plays a role of such an element. Routine check shows that such an
element exists only in $\mathfrak g_{4, 8}$ (with parameter -1), and in $\mathfrak g_{4, 9}$ with a parameter 0. Finally, observing the multiplication table of $\mathfrak g_{4, 9}$ (with zero parameter), we notice that it is identical to the multiplication of $\mathcal L$ with respect to a new basis given by
\[1' = \dfrac{1}{2}1, i'=\dfrac{1}{2} i, v'=\dfrac{1}{\sqrt{2\alpha}} v, w' = \dfrac{1}{\sqrt{2\alpha}} w, \text{ \quad if }\alpha >0. \] If $\alpha < 0$, then we use the following change of basis: 
\[1' =- \dfrac{1}{2}1, i'=\dfrac{1}{2} i, v'=\dfrac{1}{\sqrt{-2\alpha}} v, w' = \dfrac{1}{\sqrt{-2\alpha}} w.\] This implies that $\mathcal L \cong \mathfrak g_{4,9}$ (with zero parameter).
The proof is complete.
\end{proof}

%%%%%%%%%%%%%%%%%%%%%%%%%%%%%%%%%%%%%%%%%%%%%%%%%%%%%%
\subsection*{Acknowledgments}

The authors express their sincere gratitude to the referees for their insightful comments and constructive suggestions, which have greatly enhanced the quality and clarity of the final version of this paper.

%%% REFERENCES %%%
{\small

}

\EditInfo{January 14, 2025}{April 4, 2025}{Ivan Kaygorodov}

\end{document}